\definecolor{darkblue}{RGB}{0,0,160}
\numberwithin{equation}{section}
\theoremstyle{plain}%
\newtheorem{theorem}{Theorem}[section]
\newtheorem{proposition}[theorem]{Proposition}
\newtheorem{lemma}[theorem]{Lemma}
\newtheorem{corollary}[theorem]{Corollary}
\newtheorem{problem}[theorem]{Problem}
\theoremstyle{definition}
\newtheorem{definition}[theorem]{Definition}
\newtheorem{remark}[theorem]{Remark}
\newtheorem{example}[theorem]{Example}
\renewcommand{\>}{\rangle}
\renewcommand{\i}{\mathrm{i}}
\newcommand{\C}{\mathbb{C}}
\newcommand{\Z}{\mathbb{Z}}
\newcommand{\F}{\mathbb{F}}
\newcommand{\R}{\mathbb{R}}
\newcommand{\Q}{\mathbb{Q}}
\newcommand{\ini}{\mathrm{in}}
\newcommand{\adj}{\mathrm{adj}}
\DeclareMathOperator{\cl}{cl}
\DeclareMathOperator{\sgn}{sgn}
\begin{document}

\date{September 2016}

\title{The geometry of rank-one tensor completion}

\author{Thomas Kahle}
\address{Otto-von-Guericke Universität Magdeburg\\ Magdeburg, Germany}
\email{thomas.kahle@ovgu.de}
\urladdr{\url{http://www.thomas-kahle.de}}

\author{Kaie Kubjas}
\address{Aalto University\\ Helsinki, Finland}
\email{kaie.kubjas@aalto.fi}
\urladdr{\url{http://www.kaiekubjas.com}}

\author{Mario Kummer}
\address{Max Planck Institute for Mathematics in the Sciences \\ Leipzig, Germany}
\email{kummer@mis.mpg.de}
%\urladdr{\url{http://www.math.uni-konstanz.de/~kummer}}

\author{Zvi Rosen}
\address{University of Pennsylvania\\ Philadelphia, USA}
\email{zvihr@sas.upenn.edu}
\urladdr{\url{https://www.math.upenn.edu/~zvihr/}}

\makeatletter
  \@namedef{subjclassname@2010}{\textup{2010} Mathematics Subject Classification}
\makeatother
\subjclass[2010]{Primary: 14P10; Secondary: 14P05, 15A72, 13P25}
% Primary:
% 14P10 Semialgebraic sets and related spaces
% Secondary:
% 14P05 Real algebraic sets [See also 12D15, 13J30]
% 15A72 Vector and tensor algebra, theory of invariants [See also 13A50, 14L24]
% 13P25 Applications of commutative algebra (e.g., to statistics, control theory, optimization, etc.)

\begin{abstract}
The geometry of the set of restrictions of rank-one tensors to some of
their coordinates is studied.  This gives insight into the problem of
rank-one completion of partial tensors.  Particular emphasis is put on
the semialgebraic nature of the problem, which arises for real tensors
with constraints on the parameters.  The algebraic boundary of the
completable region is described for tensors parametrized by
probability distributions and where the number of observed entries
equals the number of parameters.  If the observations are on the
diagonal of a tensor of format $d\times\dots\times d$, the complete
semialgebraic description of the completable region is found.
\end{abstract}

\maketitle

\section{Introduction}\label{s:introduction}

When approaching high-dimensional tensor data, the large number of
entries demands complexity reduction of some sort. One important
structure to exploit is sparsity: tensors that have many zero entries
can be treated with specialized methods.  In this paper we focus on a
second concept, separability, which means that tensors have low rank
and thus can be parametrized by few parameters.  Specifically, we are
concerned with the \emph{rank-one completion problem}: Given a subset
of the entries of a tensor, does there exist a rank-one tensor whose
entries agree with the known data.

Tensor completion is a common task in many areas of science.  Examples
include compression problems~\cite{li2010tensor} as well as the
reconstruction of visual data~\cite{liu2013tensor} or
telecommunication signals~\cite{lim2014blind}.  Tensor completion also
appears in the guise of tensor factorization from incomplete data
which has many applications and
implementations~\cite{acar2011scalable}.  While we are specifically
concerned with the semialgebraic geometry of completability, most of
the literature deals with efficiency questions and approximate
solutions.  The main tool and mathematical hunting ground there is the
minimization of the nuclear
norm~\cite{gandy2011tensor,liu2013tensor,chandrasekaran2012convex,yu2014approximate,yang2015rank}.

Our approach here is directed towards the fundamental mathematical
question of a characterization of rank-one completability with a
particular emphasis on the real case.  What kind of constraints on the
entries of a partial tensor guarantee the completability to a rank-one
tensor?  As recognized in the matrix case, there are combinatorial
conditions on the locations of the known entries of a partial tensor.
In the best case, which here means working over an algebraically
closed field, all additional conditions are algebraic.  In all harder
and more interesting cases like the completion of real tensors,
tensors with linear constraints on the parameters, or even tensors
with inequality constraints on the parameters, the answer is almost
always semialgebraic, that is, it features inequalities.

The algebraic-combinatorial approach to matrix completion has come up
several times in the literature.  An original reference is the work of
Cohen et al.~\cite{cohen1989ranks}.  A more transparent proof of
necessary and sufficient conditions for the existence and uniqueness
of a rank-one completion was given in~\cite{hadwin2006rank}.  In this
work it became visible that there are combinatorial structures that
explain how completability depends on the locations of the observed
entries.  Algebraic and combinatorial structures underlying the
problem were further studied in
\cite{KiralyTheranTomioka,kiraly2013error}.  The semialgebraic nature
of low-rank completion problems is already visible in the matrix
case~\cite{KubjasRosen}.  The present paper continues and extends the
results of Kubjas and Rosen.
It has been recognized that solving tensor problems exactly is
systematically harder than solving matrix
problems~\cite{hillar2013most}.  In particular, low-rank tensor
completion is much more complicated than low-rank matrix completion
since tensor rank is much more complicated than matrix
rank~\cite{kolda2009tensor}.  The fact that tensor rank depends on the
field that one is working with also shines through here (see
Example~\ref{e:fieldDepend}).  Nevertheless, we conceive of our work
on the rank-one case as a stepping stone towards the low-rank case.

Our concrete approach is as follows: We consider the parametrization
map of rank-one tensors as tensor products of vectors.  Restricting
this map to a subset of the entries of the tensor, we get
parametrizations of partial tensors.  Recovery questions can then be
asked as questions about the images of the restrictions.  Over an
algebraically closed field, and with no further restrictions, the
image of the parametrization map is quite easy to describe.  This is
the classical Segre embedding from algebraic geometry.  For
applications, however, we need to work with constrained sets of
tensors and parameters.  For example, we may require that whenever the
observed entries of a partial tensor are real, the recovery ought to
be real too.  We may also choose to restrict parameters to be
nonnegative, sum to one, or both.  Examples~\ref{e:fieldDepend}
and~\ref{e:branching} show some immediate effects of these
requirements. The best possible result in our setup would be a method
that allows us to translate arbitrary inequalities and equations in
parameter space into a semialgebraic description of the image of
restrictions of~$\phi$.  We succeed with such a description in the
case of partial probability tensor (a tensor whose entries form a
probability distribution) with given diagonal entries
in~Section~\ref{s:diag-semialg-solution}.

We begin by illustrating the field dependence of tensor completion.  A
real tensor that has complex tensor rank one also has real tensor rank
one, but a similar statement for partial tensors is false.  The
principal problem is that a complex rank-one tensor can have some real
entries so that there exists no real rank-one tensor completing these
entries.  The following is an adaption of a standard example on the
difference between real and complex tensor rank, going back to
Kruskal~\cite{kruskal1983statement,kruskal1989rank}.
\begin{example}\label{e:fieldDepend}
Consider the $2\times 2\times 2$ partial tensor with third coordinate
slices
\begin{equation} \label{eq:partExample}
\begin{pmatrix}
? & 1 \\
1 & ? 
\end{pmatrix}\qquad\qquad
\begin{pmatrix}
1 & ? \\
? & -1
\end{pmatrix},
\end{equation}
where the $?$ stand for unspecified entries.
Proposition~\ref{p:tensorcomplete} below shows that the question marks
can be filled with complex numbers so that the resulting tensor has
rank one.  The question marks can, however, not be filled with real
numbers to make a real rank-one tensor.  Indeed, if this was the case,
then there would be real vectors $
\left(\begin{smallmatrix}
1\\
a
\end{smallmatrix}\right),
\left(\begin{smallmatrix}
1\\
b
\end{smallmatrix}\right),
\left(\begin{smallmatrix}
c\\
d
\end{smallmatrix}\right)
\in \R^2$ whose tensor product has the specified entries
in~\eqref{eq:partExample}. Here two entries can be chosen to be one by scaling
the first two vectors and compensating by the third.  In particular,
this means that
\[
bc = 1, \qquad ac = 1, \qquad d = 1, \qquad abd = -1.
\]
It is easy to check that there are only two complex solutions to these
equations.  In fact, the real rank-one completability of a partial
tensor like~\eqref{eq:partExample} does not depend on the exact values
of the specified entries, but only their signs.  The four entries can
be completed to a real rank-one tensor if and only if an even number
of them are negative.
\end{example}

The constraints in Example~\ref{e:fieldDepend} are given by equations.
The question becomes more interesting with semialgebraic constraints
as in the following example.
\begin{example}\label{e:branching}
Consider real rank-one $(2\times 2)$-matrices parametrized as
\[
\R^2 \times \R^2 \ni 
\begin{pmatrix}
\theta_{1,1}\\ \theta_{1,2}
\end{pmatrix},
\begin{pmatrix}
\theta_{2,1} \\ \theta_{2,2}
\end{pmatrix}
\mapsto
\begin{pmatrix}
\theta_{1,1} \theta_{2,1} & \theta_{1,1} \theta_{2,2} \\
\theta_{1,2} \theta_{2,1} & \theta_{1,2} \theta_{2,2}
\end{pmatrix}.
\]
Assume that only the diagonal entries can be observed.  It is easy to
see that in this case the set of possible diagonal entries is all
of~$\R^2$.  In applications in statistics,
$\theta_1 = (\theta_{1,1},\theta_{1,2})$ and
$\theta_2=(\theta_{2,1},\theta_{2,2})$ may be probability
distributions and satisfy $\theta_{1,2} = 1-\theta_{1,1}$ and
$\theta_{2,2} = 1-\theta_{2,1}$.  Not yet imposing nonnegativity, these
conditions constrain the diagonal entries $x_{11},x_{22}$ by
\[
(x_{11} - x_{22})^2 - 2(x_{11}+x_{22})  + 1 \geq 0.
\]
\begin{figure}[htpb]
    \centering
    \begin{subfigure}[b]{0.3\textwidth}
        \includegraphics[width=\textwidth]{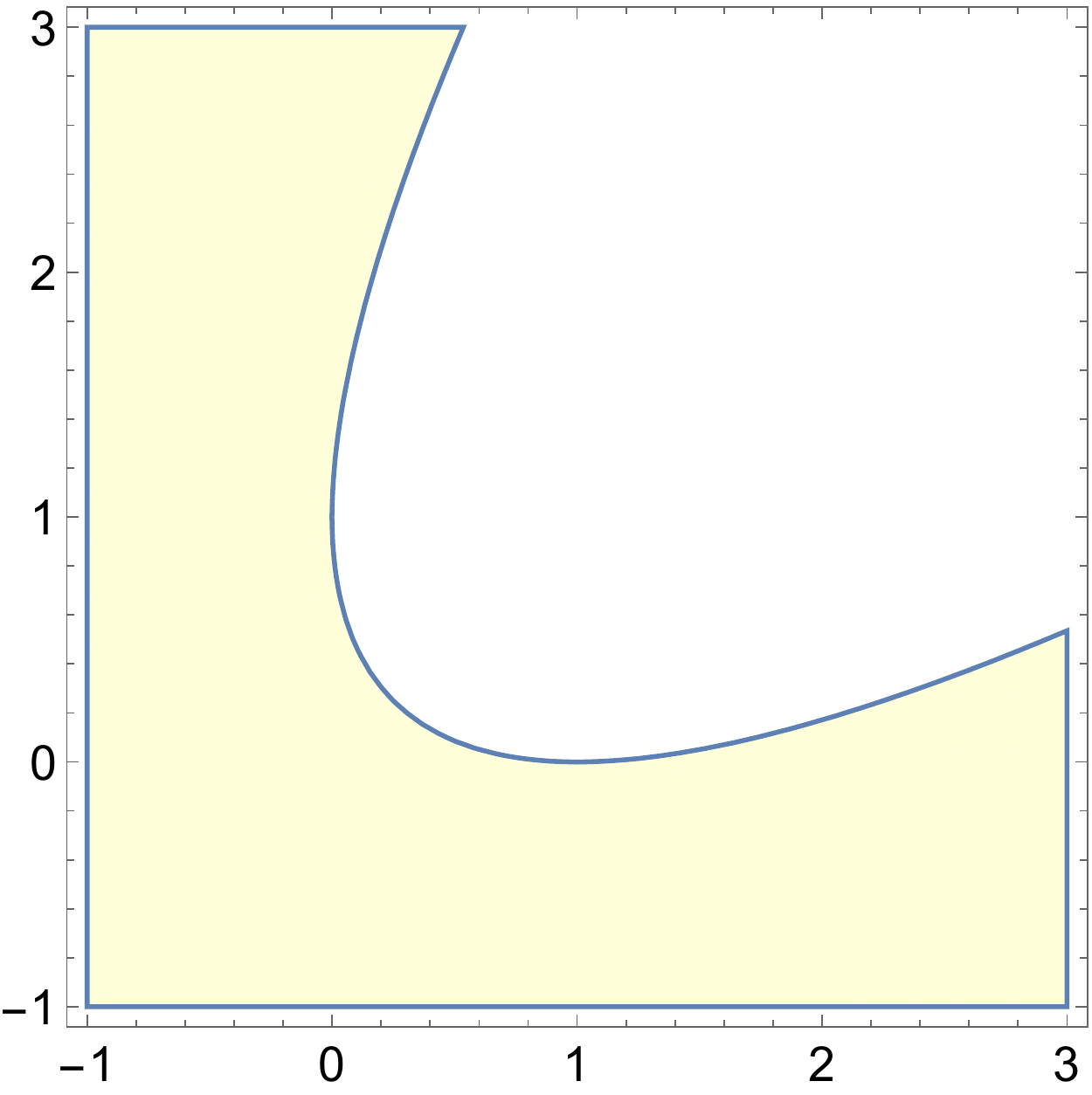}
        \caption{No inequalities}
        \label{fig:ex2A}
    \end{subfigure}
    ~ %add desired spacing between images, e. g. ~, \quad, \qquad, \hfill etc. 
      %(or a blank line to force the subfigure onto a new line)
    \begin{subfigure}[b]{0.3\textwidth}
        \includegraphics[width=\textwidth]{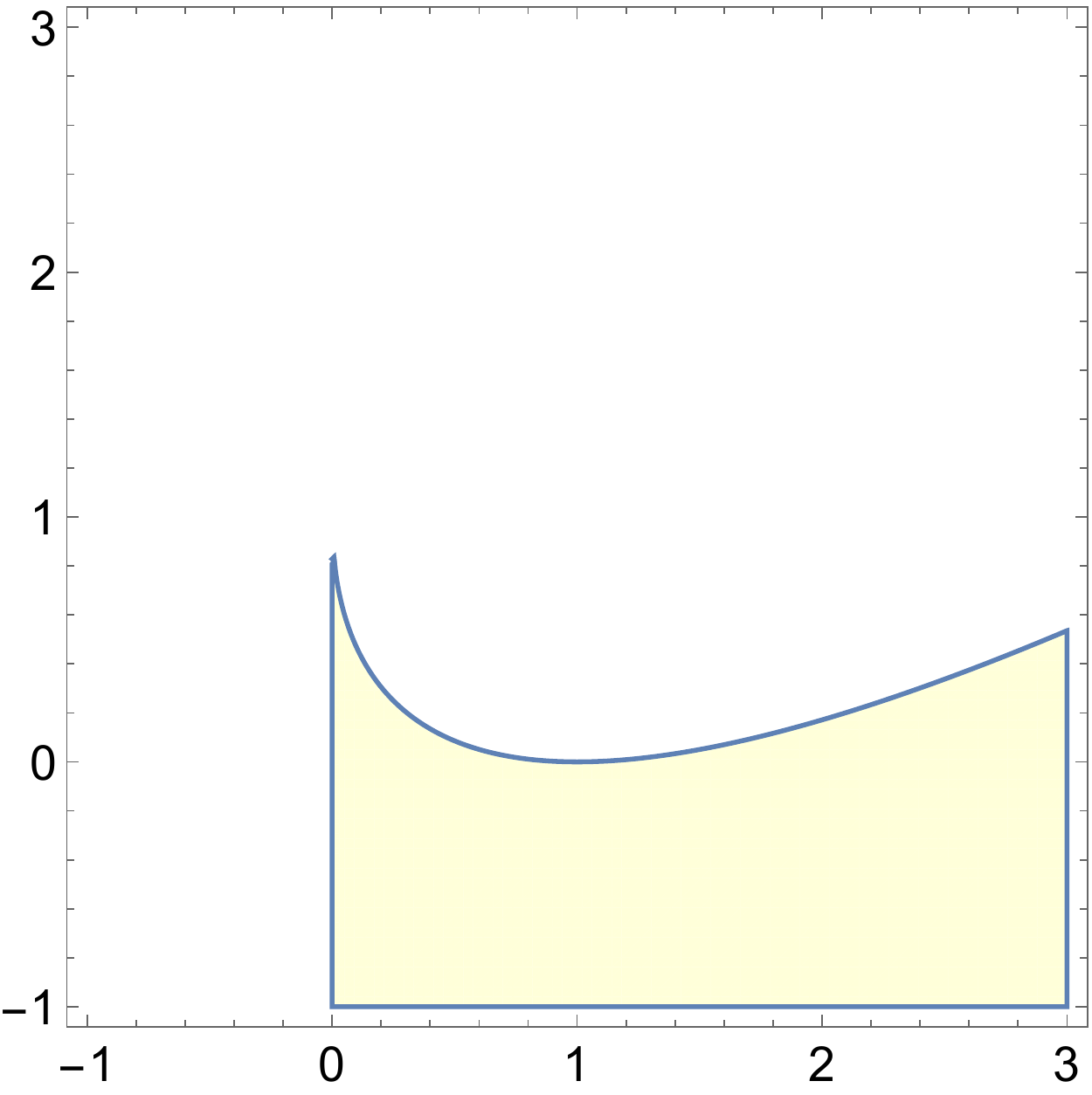}
        \caption{$\theta_{1,1},\theta_{2,1} \geq 0$}
        \label{fig:ex2B}
    \end{subfigure}
    ~ %add desired spacing between images, e. g. ~, \quad, \qquad, \hfill etc. 
    %(or a blank line to force the subfigure onto a new line)
    \begin{subfigure}[b]{0.3\textwidth}
        \includegraphics[width=\textwidth]{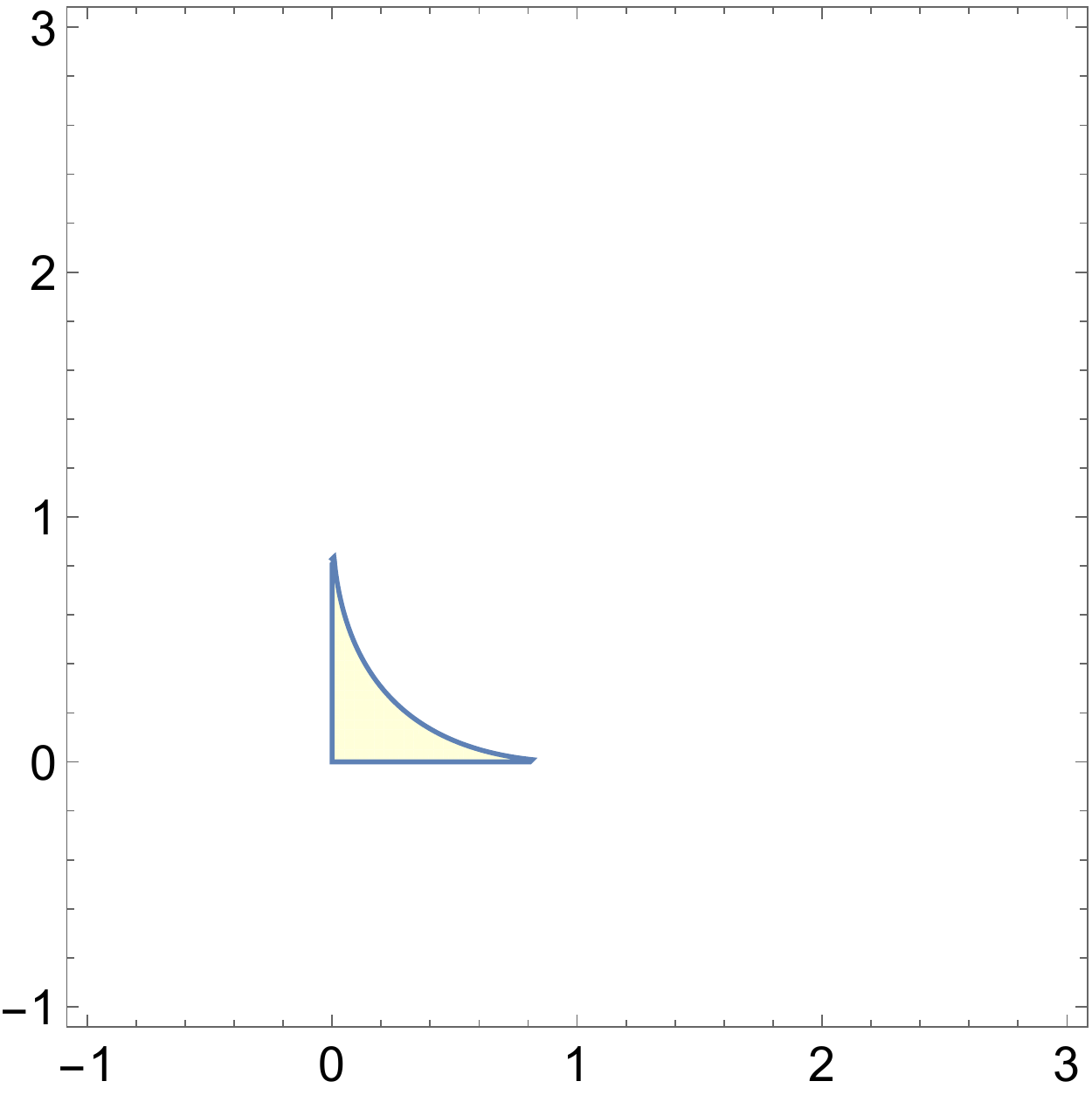}
        \caption{All parameters $\geq 0$}
        \label{fig:ex2C}
    \end{subfigure}
    \caption{Semialgebraic sets in Example~\ref{e:branching} \label{fig:ex2}}
\end{figure}
The yellow region in Figure~\ref{fig:ex2A} contains points satisfying
this constraint.  Introducing inequalities on the parameters
$\theta_1,\theta_2$ constrains the set of diagonal entries further.
For example, if $\theta_{1,1},\theta_{1,2},\theta_{2,1},\theta_{2,2}$
are the entries of probability vectors, they ought to be nonnegative.
Figure~\ref{fig:ex2B} shows the effect of imposing nonnegativity on
$\theta_{1,1}$ and $\theta_{2,1}$ (but not on $1-\theta_{1,1}$ and
$1-\theta_{2,1}$).  According to Theorem~\ref{t:diagIneqs}, imposing
also the last two conditions leads to the additional inequalities
$x_{11} \ge 0, x_{22} \ge 0$, and $x_{11} + x_{22} \le 1$
(Figure~\ref{fig:ex2C}).
\end{example}

In this paper the entries of tensors are indexed by
$D = [d_1]\times\dots\times[d_n]$ where $d_1,\dots,d_n$ are some fixed
integers, each larger than one.  A \emph{partial tensor} is an array
of real or complex numbers indexed by a subset $E\subseteq D$.  Field
assumptions are important in this work and we are precise about
whether we use $\R$ or~$\C$.

Let $\F$ be either $\R$ or $\C$.  The set of \emph{rank-one tensors}
in $\F^D$ is the image of the \emph{parametrization map}
\begin{equation}\label{eq:parametrize}
\F^{d_1} \times \cdots \times \F^{d_n} \to \F^D, \qquad (\theta_1,\dots,\theta_n) \mapsto
\theta_1\otimes \dots \otimes \theta_n.
\end{equation}
In classical algebraic geometry, the image is known as the Segre
variety.  It is characterized algebraically by quadratic binomials.
To see them, let $N_1\cup N_2 = [n]$ denote a partition of~$[n]$, and
$D_{i} = \prod_{k\in N_i} [d_k]$, $i=1,2$.  For each partition there
is a flattening of a tensor $T\in \F^D$ to a
matrix~$T \in \F^{D_1 \times D_2}$.  A tensor $T\in \F^D$ has rank at
most one if and only if all $(2\times 2)$-minors of all its
flattenings vanish.  This gives an explicit set of quadratic equations
in the indeterminates $x_i, i\in D$, representing the entries of a
tensor.  The equations can also be computed by implicitization of the
parametrization.  Consider the $\F$-algebra homomorphism
\begin{equation}\label{eq:segre-algebra-map}
\psi : \F[x_i : i \in D] \to \F[\theta_{j,k} : j\in[n], k \in[d_j]],\qquad
x_{i_1,\dots,i_n} \mapsto \prod_{j=1}^n \theta_{j,{i_j}}
\end{equation}
The toric ideal defining the Segre variety equals $\ker(\psi)$.  As
always in toric algebra, $\psi$ corresponds to a linear map
\begin{equation}\label{eq:segre-linear-map}
\Z^D \to \Z^{d_1 + \dots + d_n}, \qquad e_{i_1,\dots,i_n} \mapsto \sum_{j=1}^n e_{j,{i_j}},
\end{equation}
whose matrix in the standard basis we denote by
$A \in \{0,1\}^{(d_1+\dots +d_n)\times D}$.
See~\cite[Chapter~4]{sturmfels96GB} for an introduction to toric
algebra.  For any subset $E\subseteq D$, let $A_E$ be the matrix whose
columns are exactly the columns of $A$ with indices in~$E$.  The toric
ideal $I_E$ corresponding to $A_E$ equals the elimination ideal
$\ker(\psi) \cap \F[x_i:i\in E]$.  In general, equations for $I_E$ are
not easy to determine.  They could be easily read off a universal
Gröbner basis of $I_D$, but not much is known about this for general
$n$, even in the binary case $d_1=\dots=d_n=2$.
Isaac Burke has started to classify elements of the universal Gröbner
bases for binary rank-one tensors, but has encountered very intricate
combinatorial structures.  More information about this will appear in
his forthcoming PhD thesis.  If desired, the question of computing a
universal Gröbner basis can be formulated as a combinatorial question
about hypergraphs~\cite[Corollary~2.11]{PetrovicStasi}, but one should
not hope that the complications miraculously disappear in this
perspective.

In the remainder of the introduction we outline our specific results
together with concrete applications.  In Section~\ref{s:algComb} we
study the existence and finiteness of a rank-one completion of a
partial tensor when no further equations and inequalities are
assumed. We investigate the existence of completions over $\C$ and
$\R$ and in particular in the presence of zeros. Finitely completable
entries are characterized using matroid theory in
Proposition~\ref{p:clEexists}.  Unique completability of a partial
tensor is investigated in Corollary~\ref{c:unique_completability}.

Lower bounds on the number of observed entries for perfect recovery of
a low-rank matrix or tensor have been studied
in~\cite{candes2009exact,candes2010power,mu2014square,yuan2015tensor},
to name only a few references.  An important assumption in these
papers is that the positions of observed entries are sampled uniformly
randomly.  Proposition~\ref{p:clEexists} and
Corollary~\ref{c:unique_completability} characterize when a partial
tensor is finitely and uniquely completable to a rank-one tensor
without any assumptions on the sampling of the entries. This extends
the unique and finite completability in the matrix
case~\cite{hadwin2006rank,singer2010uniqueness,KiralyTheranTomioka} to
tensors. These results can also be used to design small sets of
locations of observed entries that guarantee finite or unique
completability to a rank-one tensor.  In the algebraic geometry
language above, tensor completion is equivalent to solving a system of
polynomial equations.  Thus Gr\"obner basis methods and numerical
algebraic geometry give effective methods for rank-one tensor
completion in the noiseless case, again with no assumptions on the
sampling of the entries.

Results in Section~\ref{s:algComb} also have possible applications in
chemometrics.  In~\cite{appellof1981strategies}, Appellof and Davidson
apply tensor decompositions of third order tensors to analysis of
multicomponent fluorescent mixtures.  There rank-one tensors
correspond to solutions with only one fluorophore.  In practice, some
of the measurements in a chemometry will not reach the excitation
level and are thus missing~\cite{andersen2003practical}.  Using our
results in Section~\ref{s:algComb}, one can verify whether tensors
with missing values correspond to solutions with only one fluorophore.
However, since real-world data has noise, in this and other
applications a further procedure to measure the distance from a
(semi)algebraic set would be necessary.

In Section~\ref{s:ramification} and in particular
Theorem~\ref{theorem:algebraic_boundary} we find a description of the
algebraic boundary of the completable region in the case that all
parameters belong to the probability simplex and the number of
observations equals the number of free parameters.  This is an
important step towards deriving a semialgebraic description of the
completable region.  Finally, Section~\ref{s:diag-semialg-solution}
illustrates in the diagonal case that totally effective semialgebraic
descriptions are possible to achieve.

\subsection*{Acknowledgments}
We thank Frank Sottile for suggesting Proposition~\ref{p:oddIndex}.
Part of this work was done while the authors visited the Simons
Institute for the Theory of Computing at UC Berkeley.  TK is supported
by the research focus dynamical systems of the state Saxony-Anhalt. MK
was supported by the Studienstiftung des deutschen Volkes. The visits
of MK and ZR to Aalto University were supported by the AScI thematic
program ``Challenges in large geometric structures and big data''

\section{Algebraic and combinatorial conditions for
completability}\label{s:algComb}

Throughout the section, let $E\subseteq D$ denote a fixed index set.
A \emph{partial tensor} is an element $T_E\in \F^E$.  Here the
subscript $E$ serves as a reminder that the tensor is partial.  For
each full tensor $T\in \F^D$, the \emph{restriction
$T_{\vert E} \in \F^E$ to $E$} consists of only those entries of $T$
indexed by $E$.  If a tensor
$T = \theta_1\otimes \dots \otimes \theta_n$ is of rank one, then any
zero coordinate in one of the $\theta_i$ yields an entire slice of
zeros in~$T$.  A first condition on completability of a partial tensor
results from this combinatorics of zeros.

\begin{definition}
Fix $j \in [n]$ and $i_j \in [d_j]$.  A \emph{maximal slice} of a
partial tensor $T\in\R^E$ is the tensor with index set
$E \cap [d_1]\times \dots \times [d_{j-1}] \times \{i_j\} \times
[d_{j+1}] \times \dots \times [d_n]$ which arises from $T$ by fixing
the $j$th index as~$i_j$.  A partial tensor is \emph{zero-consistent}
if every zero entry is contained in a maximal dimensional slice
consisting of only zero entries.
\end{definition}

The following proposition uses elimination so that $\F$ needs to be
algebraically closed.
\begin{proposition}\label{p:tensorcomplete}
A partial tensor $T_E \in \C^E$ equals the restriction of a rank-one
tensor $T\in\C^D$ to $E$ if and only if the following two conditions
hold
\begin{enumerate}
\item The partial tensor $T_E$ is zero-consistent.
\item\label{it:variety} The variety $V(I_E)$ contains~$T_E$.
\end{enumerate}
\end{proposition}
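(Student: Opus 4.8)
My plan is to treat the two implications separately. The forward direction is a direct consequence of the definitions, while the reverse is the real content and rests on a torus-image lemma. Throughout I use that over $\C$ the image of the parametrization map \eqref{eq:parametrize} is \emph{exactly} the affine variety $V(\ker\psi)$: every rank-one tensor lies in $V(\ker\psi)$, and conversely $V(\ker\psi)$ is the affine cone over the Segre variety, each of whose points is decomposable. Conceptually the elimination (closure) theorem already gives $V(I_E) = \overline{\pi_E(V(\ker\psi))}$ for the coordinate projection $\pi_E : \C^D \to \C^E$, so condition~(2) is necessary and $V(I_E)$ is the Zariski closure of the completable set; condition~(1) is then exactly what pins down which points of this closure are actually attained.

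For necessity, suppose $T_E = T_{\vert E}$ for a rank-one $T = \theta_1\otimes\dots\otimes\theta_n$. Condition~(2) is immediate: $T \in V(\ker\psi)$, and every $f \in I_E = \ker\psi \cap \F[x_i : i \in E]$ involves only the coordinates in $E$, so $f(T_E) = f(T) = 0$ and hence $T_E \in V(I_E)$. For condition~(1), if $(T_E)_i = 0$ for some $i = (i_1,\dots,i_n) \in E$, then $\prod_j \theta_{j,i_j} = 0$, so $\theta_{j,i_j} = 0$ for some $j$; this kills the entire slice $\{i' \in E : i'_j = i_j\}$, which therefore consists of zeros, proving zero-consistency.

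For sufficiency I would use zero-consistency to reduce to a full-support problem. Let $Z = \{i \in E : (T_E)_i = 0\}$ and, for a direction $j$ and value $k$, write $\sigma_{j,k} = \{i \in E : i_j = k\}$ for the corresponding maximal slice; zero-consistency says precisely that $Z = \bigcup\{\sigma_{j,k} : \sigma_{j,k}\subseteq Z\}$. I set $\theta_{j,k} = 0$ for every pair $(j,k)$ with $\sigma_{j,k}\subseteq Z$: this forces exactly the entries in $Z$ to vanish and leaves every nonzero entry untouched, since $i \in E\setminus Z$ cannot satisfy $i_j = k$ with $\sigma_{j,k}\subseteq Z$. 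It remains to realize the restriction $T_E\vert_{E\setminus Z}$, all of whose coordinates are nonzero. Because the binomial generators of $I_{E\setminus Z}$ use only variables indexed by $E\setminus Z \subseteq E$, they lie among the binomials of $I_E$, so $T_E\vert_{E\setminus Z}$ satisfies them and lands in $V(I_{E\setminus Z})\cap(\C^*)^{E\setminus Z}$.

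The heart of the matter is the torus-image lemma: for the $0/1$ matrix $A_{E\setminus Z}$, a point of $(\C^*)^{E\setminus Z}$ lies in $V(I_{E\setminus Z})$ if and only if it lies in the image of the monomial map $\theta \mapsto (\prod_j \theta_{j,i_j})_{i\in E\setminus Z}$. Granting this, I obtain nonzero parameters realizing $T_E\vert_{E\setminus Z}$; they involve only pairs $(j,k)$ occurring in $E\setminus Z$, which are disjoint from those set to zero, so the two assignments are compatible, and any unconstrained parameters may be set to $1$. The resulting $T = \theta_1\otimes\dots\otimes\theta_n$ then restricts to $T_E$. I expect this lemma to be the main obstacle, and it is where algebraic closure is essential: it is the statement that the image of a homomorphism of algebraic tori is Zariski closed in the target torus, equivalently that applying the exact functor $\mathrm{Hom}(-,\C^*)$ (exact since $\C^*$ is a divisible, hence injective, $\Z$-module) to the lattice map dual to $A_{E\setminus Z}$ surjects onto the characters killing the relation lattice $\ker_\Z A_{E\setminus Z}$, which are exactly the solutions of the binomials. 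For this I would cite the standard toric dictionary of~\cite[Chapter~4]{sturmfels96GB}.
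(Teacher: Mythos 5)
Your proof is correct, but the heart of your sufficiency argument takes a genuinely different route from the paper's. Both proofs make the same first reduction: strip off the zero slices guaranteed by zero-consistency and complete the remaining partial tensor with all entries nonzero (your bookkeeping with $Z$, the slices $\sigma_{j,k}$, and the assignment $\theta_{j,k}=0$ is a carefully spelled-out version of what the paper dispatches in one sentence, and your observation that these zero parameters cannot collide with the parameters needed for $E\setminus Z$ is exactly the point that makes the reduction legitimate). After that the paper proceeds iteratively: for each index in $D\setminus E$ it invokes the extension theorem of \cite{CLS96}, using that a toric ideal has a universal Gr\"obner basis of binomials with coprime terms, so every leading coefficient in the variable being added is a monomial and cannot vanish at a point with nonzero coordinates; after $|D\setminus E|$ steps one reaches a point of $V(I_D)$, i.e.\ a rank-one tensor. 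You instead argue in one shot via the torus-image lemma: $V(I_{E\setminus Z})\cap(\C^*)^{E\setminus Z}$ is exactly the image of the monomial map, because a point of the torus satisfying the defining binomials is a character of the lattice spanned by the columns of $A_{E\setminus Z}$, and every such character extends to the ambient lattice since $\C^*$ is divisible, hence an injective $\Z$-module (equivalently, homomorphisms of tori over $\C$ have closed image). That lemma is standard and your sketch of it is sound, though it is really the content of the Eisenbud--Sturmfels theory of binomial ideals (which the paper cites as~\cite{eisenbud96:_binom_ideal} elsewhere) rather than something stated explicitly in \cite[Chapter~4]{sturmfels96GB}. The trade-off: the paper's route is algorithmic and entry-by-entry, which is what underlies the iterative completion methods mentioned in Remark~\ref{r:graphsForMatrices}; your route avoids Gr\"obner bases entirely, isolates precisely where algebraic closedness enters, produces the rank-one parameters explicitly rather than an abstract point of $V(I_D)$, and runs on the same lattice/character machinery the paper itself uses later to prove Proposition~\ref{p:oddIndex}, so it has the virtue of unifying the two arguments.
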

\begin{proof}
If $T\in \C^D$ is of rank one, then
$T = \theta_1\otimes \dots \otimes \theta_n$ for some vectors
$\theta_i\in\C^{d_i}$.  Therefore itself and any restriction to an
index set $E\subseteq D$ are zero-consistent.  It is also clear that
it lies in $V(I_D)$ and since $V(I_E)$ is the closure of the
projection of $V(I_D)$ it contains $T_{\vert E}$.

Now let $T_E \in \C^E$ be a zero-consistent partial tensor contained
in $V(I_E)$.  Without loss of generality, we can assume that $T_E$ has
no zero entry.  Indeed, from a partial tensor with consistent zeros we
can drop the zero-slices from the notation, complete, and then insert
appropriate zero slices to the completion.

For any $E' \subseteq D$, $I_{E'}$ is a toric ideal and has a
universal Gröbner basis consisting only of binomials $x^u - x^v$ such
that $x^u$ and $x^v$ are not divisible by a common variable.  Said
differently, when considered in a specific variable $x_i$, these
binomials are of the form $gx_i^n + h$ where $g,h$ are monomials that
do not use the variable~$x_i$. The extension
theorem~\cite[Theorem~3.1.3]{CLS96} states that outside the vanishing
locus of the polynomials~$g$, a point of $V(I_{E' \backslash \{i\}})$
can be extended to a point of~$V(I_{E'})$. Since all $g$ are
monomials, their vanishing locus is contained in the coordinate
hyperplanes.  Hence every partial tensor $T_E \in V(I_E)$ without zero
entries can be completed to $T \in V(I_D)$ by applying the extension
theorem $|D\setminus E|$ times.
\end{proof}

\begin{remark}\label{r:circuit_ideals}
The second condition in Theorem~\ref{p:tensorcomplete} need not
necessarily be checked on the toric ideal $I_{E}$ which may be
computationally unavailable.  There are several binomial ideals that
have $I_{E}$ as their radical and thus define the same variety.  A
natural example is the circuit ideal $C_E$ generated by all binomials
corresponding to circuits of $A_E$.  By
\cite[Proposition~8.7]{eisenbud96:_binom_ideal} it holds that
$V(C_E) = V(I_E)$.
\end{remark}

\begin{example}\label{e:realFalse}
Example~\ref{e:fieldDepend} shows that
Proposition~\ref{p:tensorcomplete} fails if $\C$ is replaced by~$\R$.
For $E=\{112,121,211,222\}$ the restricted matrix $A_E$ has full rank
and thus $V_\R(I_E) = \R^4$ while the given $T_E$ has no real rank-one
completion.
\end{example}

\begin{remark}\label{r:graphsForMatrices}
Rank-one matrix completion can be studied combinatorially using graph
theory.  Given a partial matrix $T_E \in \R^{E}$, one can define a
bipartite graph $G$ with vertex set $[d_1]\times [d_2]$ and edge
set~$E$.  The rank-one matrix completions are studied using
zero-entries and cycles of~$G$.  Zero-consistency of a matrix is
called \emph{singularity with respect to 3-lines}
in~\cite{cohen1989ranks} and {\em the zero row or column property}
in~\cite{hadwin2006rank}.  A partial matrix $T_E$ satisfies the second
condition in Proposition~\ref{p:tensorcomplete} if and only if on
every cycle $C$ of~$G$, the product over the edges with even indices
equals the product over the edges with odd indices.  This observation
follows from the explicit description of the generators of the
universal Gröbner basis in terms of cycles on the bipartite
graph~\cite[Proposition~4.2]{villarreal1995rees}. In
\cite{Ohsugi_Hibi_1999} the Graver basis is computed, which in this
case coincides with the universal Gröbner basis.  This condition is
called \emph{singularity with respect to cycles}
in~\cite{cohen1989ranks} and {\em the cycle property}
in~\cite{hadwin2006rank}. Rank-one matrices have a square-free
universal Gröbner basis. Therefore all uses of the extension theorem
as in Proposition~\ref{p:tensorcomplete} yield linear equations.  In
the tensor case the combinatorial interpretations break down and, for
example, the universal Gröbner basis of rank-one $2\times 2\times 2$
tensors is of degree three and not square-free.  However, iterative
algorithms using the extension theorem or related methods also work
for tensors.
\end{remark}

The problem of real rank-one completion is, for each $E\subseteq D$,
to determine the difference of the image of real and complex rank-one
tensors $T$ under the restriction map $T\mapsto T_{\vert E}$.  This
problem leads to a combinatorial problem in $\Z$-linear algebra
clarified in Proposition~\ref{p:oddIndex}.  Consider a partial tensor
$T_E\in\R^E$ that satisfies the conditions in
Proposition~\ref{p:tensorcomplete}.  As in the proof of
Proposition~\ref{p:tensorcomplete} we can assume that $T_E$ has no
zero entries as these would be contained in zero slices which we can
ignore for completion.  The parametrization \eqref{eq:parametrize} of
the entries $T_e, e\in E$ of a completion of $T_E$ takes the form
\begin{equation}\label{e:laurent}
T_e = \theta_{1,e_1}\cdots \theta_{n,e_n}, \qquad e\in E,
\end{equation}
where $\theta_{j,e_j}$ denotes the $e_j$th component of~$\theta_j$.
Completability questions are questions about the solutions of this
system of binomial equations.  Additionally assume that $E$ meets
every maximal dimensional slice of~$D$, which implies that every
parameter $\theta_{j,k}$ appears at least once in the
equations~\eqref{e:laurent}.  Given $T_e \neq 0, e\in E$, this implies
that any solution has only nonzero values for the parameters.  This
means that the ideal generated by~\eqref{e:laurent} can be considered
as an ideal in Laurent polynomial ring
$\F[\theta_{j,k}^{\pm}, j=1,\dots,n, k\in[d_j]]$, and the theory of
\cite[Section~2]{eisenbud96:_binom_ideal} applies.  In particular, the
equations can be diagonalized by computing the Smith normal form of
the exponent vectors of the monomials appearing in~\eqref{e:laurent},
which corresponds to a multiplicative coordinate change.

\begin{example}\label{e:laurentDiag}
The equations in Example~\ref{e:fieldDepend} can be diagonalized to
\[
x^2 = -1, y = 1, z = 1, w = 1,
\]
where $x^2 = ab(bc)^{-1}(ac)^{-1} = c^{-2}$ , $y=ac$, $z=bc$, $w=d$.
\end{example}

As a consequence of the diagonalization argument that
gives~\cite[Theorem~2.1(b)]{eisenbud96:_binom_ideal}, we get the
following proposition.  

\begin{proposition}\label{p:oddIndex}
For a given subset $E\subseteq D$ the following are equivalent:
\begin{enumerate}[(i)]
\item Every real partial tensor $T_E \in \R^E$ with nonzero entries
which is completable over the complex numbers is also completable over
the real numbers.
\item The index of the lattice spanned by the columns of $A_E$ in its
saturation is odd.
\end{enumerate}
Moreover, whether a real partial tensor $T_E \in \R^E$ with nonzero
entries which is completable over the complex numbers is also
completable over the real numbers depends only on the signs of the
entries of~$T_E$.
\end{proposition}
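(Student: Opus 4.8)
The plan is to exploit the diagonalization described in the paragraph preceding the proposition, which reduces the system~\eqref{e:laurent} to a system in Smith normal form. Writing $A_E \in \{0,1\}^{(d_1+\dots+d_n)\times E}$ for the restricted matrix, its Smith normal form produces invariant factors $s_1,\dots,s_r$, where $r=\rank(A_E)$, together with unimodular matrices realizing a multiplicative change of coordinates on both the parameters and the data. In the new coordinates the binomial system~\eqref{e:laurent} becomes a collection of decoupled equations of the form $u_k^{s_k} = c_k$ for $k=1,\dots,r$, where the $c_k$ are Laurent monomials in the given nonzero entries $T_e$ and the $u_k$ are the transformed parameters. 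Completability over a field $\F$ is then equivalent to solvability of each of these single-variable equations over~$\F$.

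The key observation is that the index of the lattice $\sL$ spanned by the columns of $A_E$ inside its saturation $\cl(\sL)=(\sL\otimes\Q)\cap\Z^{d_1+\dots+d_n}$ equals the product $s_1\cdots s_r$ of the invariant factors. Over $\C$, each equation $u_k^{s_k}=c_k$ is solvable precisely when $c_k\neq 0$, which is guaranteed by the assumption that all $T_e\neq 0$ together with zero-consistency; this recovers complex completability. Over $\R$, the equation $u_k^{s_k}=c_k$ with $c_k\neq 0$ is solvable if and only if either $s_k$ is odd, or $s_k$ is even and $c_k>0$. Hence the only obstruction to real completability of a complex-completable tensor comes from the even invariant factors. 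I would argue that the existence of an even invariant factor is equivalent to the index $s_1\cdots s_r$ being even, giving the equivalence of~(i) and~(ii): if every $s_k$ is odd the product is odd and every real $T_E$ completes, whereas if some $s_k$ is even then choosing the corresponding $c_k<0$ (achievable by suitable signs of the entries, since the $c_k$ are monomials with $\pm 1$ coefficients in the transformed data) produces a complex-completable tensor with no real completion.

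For the final ``moreover'' clause, I would note that in each decoupled equation $u_k^{s_k}=c_k$ the solvability over $\R$ depends only on the sign of $c_k$ (for $s_k$ even) and not at all (for $s_k$ odd), and that $c_k$ is a Laurent monomial in the $T_e$ with unit coefficient, so its sign is determined by the signs of the $T_e$. Therefore whether the system is solvable over $\R$ is a function of $\sgn(T_e)$, $e\in E$, alone.

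I expect the main obstacle to be bookkeeping the multiplicative coordinate change precisely enough to see that the transformed right-hand sides $c_k$ are genuine monomials in the original data (so that their signs are freely prescribable and depend only on the input signs), and to connect the parity of the invariant factors to the parity of the lattice index cleanly. This is exactly the content invoked from~\cite[Theorem~2.1(b)]{eisenbud96:_binom_ideal}, so the work is in translating that binomial-ideal statement into the parity statement about the index of $\sL$ in its saturation, rather than in any hard new estimate.
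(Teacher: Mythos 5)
Your proposal is correct and takes essentially the same route as the paper's proof: the paper likewise diagonalizes the torus homomorphism via the Smith normal form of $A_E$ (phrased there as composing with group automorphisms of $(\C^*)^r$ and $(\C^*)^s$), identifies the index of the column lattice in its saturation with the product of the invariant factors, and settles both the equivalence and the ``moreover'' clause by exactly your parity/sign analysis of the decoupled equations $u_k^{a_k}=c_k$. The only difference is presentational—you manipulate the power equations directly, while the paper speaks of real points and their preimages under the normalized torus map—so there is nothing substantive to add.
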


\begin{proof}
Since we assume the entries to be nonzero $(i)$ is equivalent to the
homomorphism of tori $\psi: (\C^*)^r\to(\C^*)^s$ (for suitable $r$ and
$s$) corresponding to our parametrization having the property that
every real point in the image has a real point in its preimage.  After
applying suitable group automorphisms of $(\C^*)^r$ and $(\C^*)^s$ we
can assume that $\psi$ is of the form
$\psi(x_1,\ldots,x_r)=(x_{1}^{a_1},\ldots,x_r^{a_r},1,\ldots,1)$ if
$s \geq r$ and $\psi(x_1,\ldots,x_r)=(x_{1}^{a_1},\ldots,x_s^{a_s})$
otherwise (this corresponds to computation of the Smith normal form
of~$A_E$).  Group automorphisms of $(\C^*)^r$ send real points to real
points.  Thus $(i)$ holds if and only if all $a_i$ are odd. Since the
index of the lattice spanned by the columns of $A_E$ in its saturation
is the product of the $a_i$ the claim follows.

For the second part assume that $a_1,\dots,a_k$ are odd and
$a_{k+1},\ldots,a_r$ are even.  Consider first the case that
$s \geq r$.  Then $T_E$ is completable over the real numbers if and
only if it is in the preimage of
\[(\R^*)^k\times (\R_{>0})^{r-k}\times \{1\}^{s-r}\]
under the above automorphism of $(\C^*)^s$. Assuming completability
over the complex numbers, this translates to conditions on the signs
of the entries of~$T_E$.  A similar argument applies for $s < r$.
\end{proof}

In Proposition~\ref{p:oddIndex}, the assumption that the partial
tensor $T_E$ has nonzero entries is no loss of generality, since any
zero entry of a rank-one tensor is contained in a maximal dimensional
slice of zeros.  These maximal dimensional slices of zeros originate
from parameters being zero and can be dealt with separately.

Given completability over a fixed field, one can ask about uniqueness
properties of the completion.  More generally, for some of the
completed entries there could be only finitely many choices while
others can vary continuously.  An entry that has only finitely many
possible values for rank-one completion, is called \emph{finitely
completable}. An entry that has only one possible value for rank-one
completion, is called \emph{uniquely completable}. The occurrence of
finitely completable entries is natural.  For example, if three
entries of a rank-one $2\times 2$ matrix are given, the determinant
becomes a linear polynomial determining the fourth entry.  The proof
of Proposition~\ref{p:tensorcomplete} shows how the finitely
completable entries are solutions of certain binomial equations.  In
this context, an important observation is that, generically, the
locations of the finitely completable entries do not depend on the
entries of $T_E$, but just the combinatorics of~$E$. This statement
for matrices can be found in~\cite[Theorem 10]{KiralyTheranTomioka}.

\begin{proposition}\label{p:clEexists}
There is a matroid on ground set $D$ with closure function
$\cl : 2^D \to 2^D$ having the following property: Let $E\subseteq D$
be any index set with $T_E \in \C^E$ a generic partial tensor
completable according to Proposition~\ref{p:tensorcomplete}.  Then the
closure $\cl(E)$ consists exactly of the entries that are finitely
completable from the entries in~$E$.
\end{proposition}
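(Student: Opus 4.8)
The plan is to exhibit an explicit matroid whose closure operator captures finite completability, and the natural candidate is the \emph{algebraic matroid} of the Segre variety $V(I_D)$ over the field $\C$. Recall that for an irreducible variety $X \subseteq \C^D$ with coordinate functions $x_i$, one obtains a matroid on the ground set $D$ by declaring a subset $S \subseteq D$ to be independent if the coordinate functions $\{x_i : i \in S\}$ are algebraically independent over $\C$ as elements of the function field $\C(X)$. This is a standard construction (the algebraic matroid of $X$), and its closure operator is $\cl(S) = \{ i \in D : x_i \text{ is algebraically dependent on } \{x_j : j \in S\} \}$. I would take this to be the matroid in the statement.

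The key step is to \emph{translate algebraic (in)dependence into finite completability}. First I would set up the picture from the proof of Proposition~\ref{p:tensorcomplete}: a generic completable partial tensor $T_E$ sits in $V(I_E)$, which is the closure of the projection $\pi_E \colon V(I_D) \to \C^E$. An entry indexed by $i \in D$ is finitely completable from $E$ precisely when, on the generic fiber of $\pi_E$, the coordinate $x_i$ takes only finitely many values; equivalently, $x_i$ is algebraic over the subfield of $\C(V(I_D))$ generated by $\{x_e : e \in E\}$. This is exactly the condition $i \in \cl(E)$ in the algebraic matroid. I would make this rigorous by noting that the fiber dimension of $\pi_E$ equals the transcendence degree of $\C(V(I_D))$ over $\C(x_e : e\in E)$, and that a single coordinate is constant-to-finite on the generic fiber iff it is algebraic over that subfield. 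Here one must invoke genericity of $T_E$ so that the fiber over $T_E$ behaves like the generic fiber: by generic flatness / the theorem on the dimension of fibers, there is a dense open subset of the image over which all fibers have the expected dimension and over which the finiteness of each coordinate is governed by the generic behavior. The hypothesis that $T_E$ is a \emph{generic} completable partial tensor is precisely what lets me restrict to this open set.

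The \textbf{main obstacle} I expect is the passage from ``generic fiber'' statements in the function field to a claim about the \emph{specific} generic point $T_E$, uniformly in the single coordinate $x_i$. One must ensure that the open dense locus of good behavior can be chosen independently of $i$ (there are only finitely many $i \in D$, so intersecting finitely many dense opens is harmless) and that finite completability of the entry $x_i$ over $\C$ genuinely coincides with algebraicity in $\C(V(I_D))$ rather than merely over the larger field $\overline{\C(x_e : e \in E)}$. Since we work over the algebraically closed field $\C$ and $V(I_D)$ is irreducible, the generic fiber is geometrically integral and these subtleties dissolve: algebraicity of $x_i$ over $\C(x_e : e \in E)$ forces the value of the $i$th coordinate to lie in a finite set on a dense open subset of fibers, and conversely. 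I would therefore conclude that $\cl(E)$, computed in the algebraic matroid of the Segre variety, is exactly the set of finitely completable entries, with the matroid axioms for $\cl$ furnished automatically by the algebraic matroid construction.
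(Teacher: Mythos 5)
Your proposal is correct and follows essentially the same route as the paper: the paper also takes the algebraic matroid of the Segre variety (the toric ideal $\ker(\psi)$), identifies its closure with algebraic dependence of coordinates over $\{x_e : e \in E\}$, and translates this into finite completability — dependence yields a univariate polynomial with finitely many roots, while independence forces infinitely many values on the generic fiber (this is the content of Proposition~\ref{p:closureExplicit}, from which the paper deduces the statement). Your added care about choosing the dense open locus uniformly in $i$ and invoking fiber-dimension theorems makes explicit a genericity point that the paper's terse proof leaves implicit, but it is the same argument.
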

\begin{proof}
This follows from Proposition~\ref{p:closureExplicit} below.
\end{proof}

The gist of Proposition~\ref{p:clEexists} is that for generic $T_E$,
the set of finitely completable entries does not depend on the entries
of $T_E$, but only on~$E$.  Even more, $\cl(E)$ is an honest closure
relation on explicit matroids.  The following matroids can be used.
\begin{itemize}
\item The column matroid of the Jacobian of the
parametrization~\eqref{eq:parametrize}.
\item The algebraic matroid of the toric ideal $\ker(\psi)$
in~\eqref{eq:segre-algebra-map}.
\item The column matroid of the matrix $A$
defining~\eqref{eq:segre-linear-map}.
\end{itemize}

The equivalence of these three matroids is well-known.  The algebraic
matroid of the coordinate ring of a toric ideal equals the linear
matroid of the defining matrix.  The equivalence of the first and
second matroid follows from \cite[Proposition~2.14]{KRT13}.  The
closure function can be specified algebraically as follows.  For any
index set $E\subseteq D$, let $\C[E] := \C[x_e : e\in E]$ be a
polynomial ring with one indeterminate for each entry of a partial
tensor with index set~$E$.

\begin{proposition}\label{p:closureExplicit}
The closure function $\cl : 2^D \to 2^D$ of the algebraic matroid
defined by the ideal $I$ is the function which maps a set
$E \subseteq D$ to the largest set $E'$ containing $E$ such that the
projection $\C^{E'} \to \C^{E}$ induces a generically finite-to-one
map on $V(I_D \cap \C[E']) \to V(I_D \cap \C[E])$.
\end{proposition}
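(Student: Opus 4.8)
The plan is to translate the geometric ``generically finite-to-one'' condition into an equality of matroid ranks, and then to recognize the resulting maximal set as the matroid closure by the defining property of the closure operator. Recall that for the algebraic matroid on ground set $D$ attached to the prime ideal $I_D$, the rank of a subset $E\subseteq D$ is the transcendence degree $\mathrm{trdeg}_\C\,\C(x_e:e\in E)$ of the subfield of the function field of $V(I_D)$ generated by the coordinate functions indexed by $E$, and the closure $\cl(E)$ is the set of all $d\in D$ with $\rank(E\cup\{d\})=\rank(E)$, equivalently the largest superset of $E$ having the same rank. So it suffices to prove that, for $E'\supseteq E$, the projection $V(I_D\cap\C[E'])\to V(I_D\cap\C[E])$ is generically finite-to-one if and only if $\rank(E')=\rank(E)$.

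First I would identify the dimension of the projected variety with the matroid rank. Since $I_D$ is a toric, hence prime, ideal, $V(I_D)$ is irreducible, and $I_D\cap\C[E']$ is exactly the kernel of the composition $\C[E']\hookrightarrow\C[x_i:i\in D]\to\C[x_i:i\in D]/I_D$. As a quotient mapping onto a subring of the domain $\C[x_i:i\in D]/I_D$, this kernel is prime, so $V(I_D\cap\C[E'])$ is irreducible with function field $\C(x_e:e\in E')$; hence $\dim V(I_D\cap\C[E'])=\rank(E')$, and likewise $\dim V(I_D\cap\C[E])=\rank(E)$. The coordinate projection $\C^{E'}\to\C^E$ restricts to a morphism $V(I_D\cap\C[E'])\to V(I_D\cap\C[E])$, because both targets are Zariski closures of the images of the \emph{same} irreducible variety $V(I_D)$ under the projections to $\C^{E'}$ and to $\C^E$; continuity of the projection forces the image of $V(I_D\cap\C[E'])$ to be dense in $V(I_D\cap\C[E])$, so this morphism is dominant.

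Next I would invoke the fiber-dimension theorem: a dominant morphism of irreducible varieties over $\C$ has a nonempty open subset of the target over which every fiber has dimension $\dim V(I_D\cap\C[E'])-\dim V(I_D\cap\C[E])$. In particular the generic fiber is finite precisely when the two dimensions agree, that is, when $\rank(E')=\rank(E)$; if instead $\rank(E')>\rank(E)$, the generic fiber is positive-dimensional and hence infinite. This is the step demanding the most care, since the equivalence ``finite generic fiber $\Leftrightarrow$ equal dimension'' is only valid once dominance has been secured, which is exactly why the observation in the previous paragraph that both varieties descend from the single irreducible $V(I_D)$ is essential.

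Finally I would conclude. The set of $E'\supseteq E$ for which the projection is generically finite-to-one is exactly $\{E'\supseteq E:\rank(E')=\rank(E)\}$, and among these there is a unique largest element, namely $\cl(E)$: every such $E'$ satisfies $E'\subseteq\cl(E)$, since for each $d\in E'$ monotonicity gives $\rank(E)\le\rank(E\cup\{d\})\le\rank(E')=\rank(E)$, so $d\in\cl(E)$; and $\cl(E)$ itself has rank $\rank(E)$. That the union of all equal-rank supersets is again of equal rank, so that a largest element exists, is precisely the defining property of the matroid closure operator and requires no computation.
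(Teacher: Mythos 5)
Your proof is correct, and it takes a genuinely different route from the paper's. The paper argues element-by-element using the definition of algebraic dependence: for $f\in\cl(E)\setminus E$, an algebraic dependence relation of $f$ on $E$, specialized at a point of $V(I_D\cap\C[E])$, becomes a univariate polynomial in $x_f$, so $x_f$ can take only finitely many values; for $f\notin\cl(E)$, transcendence over $E$ means $x_f$ takes infinitely many values in the generic fiber. You instead globalize: you identify the matroid rank of $E'$ with $\dim V(I_D\cap\C[E'])$ via primality of the elimination ideal and the function-field description, verify that the coordinate projection is a well-defined dominant morphism of irreducible varieties, and then invoke the fiber-dimension theorem to get ``generically finite-to-one $\Leftrightarrow$ equal rank,'' finishing with the purely matroid-theoretic fact that $\cl(E)$ is the largest equal-rank superset. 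The trade-off: the paper's argument is more elementary and exhibits explicit polynomials cutting out the finite fibers, but it is terser about two points your version handles cleanly --- that the specialized dependence polynomial could vanish identically at special points (this is exactly where the genericity hypothesis enters, and your open subset from the fiber theorem absorbs it automatically), and that the projection between the two elimination varieties is dominant, which is needed before one can speak of generic fibers at all.
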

\begin{proof}
The closure $\cl(E)$ of $E$ consists of all elements dependent on $E$.
For an algebraic matroid, $\cl(E)$ consists of all elements algebraic
over~$E$.  Given a point in $V(I_D \cap \C[E])$ and
$f\in\cl(E)\setminus E$, substituting the indeterminates $x_e, e\in E$
with its coordinates, the algebraic dependence of $f$ on $E$ yields a
univariate polynomial in~$x_f$.  As for the elements not in the
closure of $E$: since they are not algebraic over~$E$, they can take
infinitely many values; in particular, the fiber over the generic
point in that projection is infinite.
\end{proof}

Proposition~\ref{p:oddIndex} also gives a characterization when a
partial tensor is uniquely completable to a rank-one tensor.  Also
here the assumption on nonzero entries can be dealt with separately.

\begin{corollary}\label{c:unique_completability}
\begin{enumerate}[(i)]
\item A partial tensor with nonzero entries is uniquely completable to
a complex rank-one tensor if and only if it is finitely completable
and the lattice spanned by the columns of $A_E$ is saturated.
\item A real partial tensor with nonzero entries is uniquely
completable to a real rank-one tensor if and only if it is finitely
completable and the index of the lattice spanned by the columns of
$A_E$ in its saturation is odd.
\end{enumerate}
\end{corollary}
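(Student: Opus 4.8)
The plan is to reduce the counting of rank-one completions to counting points in a fibre of a homomorphism of algebraic tori, and then to read off the two cases from the Smith normal form of $A_E$, which is exactly the diagonalization used in the proof of Proposition~\ref{p:oddIndex}. First I would make two reductions. Assume $T_E$ has nonzero entries and is completable. I claim finite completability forces $E$ to meet every maximal dimensional slice of $D$: if some parameter $\theta_{j,k}$ occurred in none of the equations~\eqref{e:laurent}, the entries in the slice $\{d : d_j = k\}$ could be scaled freely and would not be finitely completable, contradicting $\cl(E)=D$. Hence every parameter appears, so for a completion $T=\theta_1\otimes\dots\otimes\theta_n$ the condition $T_{\vert E}=T_E$ with $T_E$ nonzero forces all $\theta_{j,k}\neq 0$, and $T$ has no zero entry. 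Thus all completions lie in the Segre torus $\mathcal{T}_D = V(I_D)\cap(\C^*)^D$, and they are exactly the points of the fibre $\rho^{-1}(T_E)$ of the restriction homomorphism $\rho:\mathcal{T}_D\to\mathcal{T}_E$ onto the Segre torus for $E$.

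For the complex count, on character lattices $\rho$ is the inclusion $L_E\hookrightarrow L_D$ of the column lattices of $A_E$ and $A$. Finite completability means $\rank A_E=\rank A$, so this inclusion has finite index and $\rho$ is an isogeny; its fibre over any point of $\mathcal{T}_E$ is a coset of the finite group $\ker\rho$, of cardinality $[L_D:L_E]$. The column lattice $L_D$ of the Segre matrix is saturated in $\Z^{d_1+\dots+d_n}$: it is precisely the lattice of vectors whose block sums over the $n$ blocks are all equal, which is a kernel lattice and hence saturated (see also the normality of the Segre variety, \cite[Chapter~4]{sturmfels96GB}). Therefore $L_D=\mathrm{sat}(L_E)$, the number of complex completions equals $[\mathrm{sat}(L_E):L_E]$, and this is $1$ exactly when $L_E=L_D$, i.e.\ when $L_E$ is saturated. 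This proves~(i).

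For the real case I would diagonalize $\rho$ via the Smith normal form of the inclusion $L_E\hookrightarrow L_D$, equivalently of $A_E$, as in the proof of Proposition~\ref{p:oddIndex}. In suitable torus coordinates, related to the originals by monomial automorphisms defined over $\Z$ that preserve $(\R^*)^\ell$ and hence signs, the map becomes $(t_1,\dots,t_\ell)\mapsto(t_1^{a_1},\dots,t_\ell^{a_\ell})$, where $a_1,\dots,a_\ell$ are the nonzero elementary divisors and $\prod_i a_i=[\mathrm{sat}(L_E):L_E]$. Over the real image $(c_1,\dots,c_\ell)$ of $T_E$, the equation $t_i^{a_i}=c_i$ has one real solution when $a_i$ is odd and, when $a_i$ is even, two solutions if $c_i>0$ and none if $c_i<0$; the latter recovers the sign obstruction of Proposition~\ref{p:oddIndex}. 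Assuming completability over $\R$, the number of real completions is $2^{\#\{i\,:\,a_i\text{ even}\}}$, which equals $1$ precisely when every $a_i$ is odd, that is, when $[\mathrm{sat}(L_E):L_E]$ is odd. When the index is odd the sign conditions are vacuous, so real completability is automatic for complex-completable $T_E$; this gives~(ii).

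I expect the main obstacle to be the bookkeeping that turns ``number of completing tensors'' into the lattice index $[\mathrm{sat}(L_E):L_E]$: one must justify the reduction to the torus $\mathcal{T}_D$ so that distinct completions are genuinely distinct torus points and none is lost on a coordinate hyperplane, and one must use the saturation of $L_D$ to identify $L_D$ with $\mathrm{sat}(L_E)$. Once the fibre of $\rho$ is identified with a $\ker\rho$-coset, both parts follow from the elementary-divisor analysis, the real case needing only the elementary fact that an odd power is a bijection of $\R^*$ while an even power is two-to-one onto $\R_{>0}$.
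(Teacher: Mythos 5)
Your proof is correct, and while it shares its computational engine with the paper's proof --- the Smith normal form of $A_E$ and the odd/even dichotomy for real solutions of $t^a=c$ --- the way you set up the count is genuinely different. The paper's proof is two sentences: it reuses the diagonalization $\psi(x_1,\ldots,x_r)=(x_1^{a_1},\ldots,x_r^{a_r},1,\ldots,1)$ of the parametrization from the proof of Proposition~\ref{p:oddIndex} and declares that a point of the image has a unique complex (resp.\ real) preimage if and only if all $a_i$ equal one (resp.\ are odd). Read literally, this counts parameter tuples rather than completions, and parameter preimages are never unique: the scaling torus $\{(\lambda_1\theta_1,\ldots,\lambda_n\theta_n):\prod_j\lambda_j=1\}$ lies in every fiber, so at least $n-1$ of the paper's $a_i$ vanish. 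A careful reader must therefore identify completions with preimages modulo the free coordinates and check that, under finite completability, those free directions are exactly the scalings. Your proposal carries out precisely this bookkeeping, but on the tensor side: you quotient out the scalings from the start by working in the Segre torus $\mathcal{T}_D=V(I_D)\cap(\C^*)^D$, identify the completions with the fiber of the isogeny $\rho:\mathcal{T}_D\to\mathcal{T}_E$, and compute $|\ker\rho|=[L_D:L_E]=[\mathrm{sat}(L_E):L_E]$ using two facts the paper nowhere makes explicit: that finite completability is equivalent to $\rank A_E=\rank A$ (via Proposition~\ref{p:clEexists}), and that the full Segre lattice $L_D$ is saturated (the equal-block-sums description), whence $L_D=\mathrm{sat}(L_E)$. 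Your preliminary reduction --- finite completability forces $E$ to meet every maximal slice, so every completion has nonzero entries and lies in $\mathcal{T}_D$ --- is likewise absent from, yet needed by, the paper's argument. In short, the paper's route buys brevity, as an immediate corollary of work already done in Proposition~\ref{p:oddIndex}; yours buys a self-contained argument that rigorously bridges the gap between counting parameter choices and counting completions, which is exactly the subtlety the published proof elides.
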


\begin{proof}
As in the proof of Proposition~\ref{p:oddIndex}, we can assume that
$\psi$ is of the form
$\psi(x_1,\ldots,x_r)=(x_1^{a_1},\ldots,x_r^{a_r},1,\ldots,1)$. A
point in the image has a unique complex preimage if and only if
$a_1,\ldots,a_r$ are all one. A real point in the image has a unique
real preimage if and only if $a_1,\ldots,a_r$ are all odd.
\end{proof}

\begin{example}\label{e:finiteComplete}
In the matrix case, the finitely completable entries of a generic
partial matrix form a block diagonal partial matrix after a suitable
indexing of rows and columns (we do not assume that matrices or blocks
are square matrices).  The reason is that the closure operation on the
Jacobian matroid can be interpreted as the closure operation on the
graphic matroid of the bipartite graph whose vertices are the row and
column labels and whose edges correspond to~$E$.  This closure
completes connected components to complete bipartite graphs, and a
bipartite graph where all connected components are complete
corresponds to a block diagonal matrix after a suitable indexing of
rows and columns.  An analogous statement for tensors is not true.
For example, the partial $2 \times 2 \times 2$ tensor with observed
entries at positions $(1,1,2),(1,2,1),(2,1,1)$ (the blue entries in
Figure~\ref{fig:222flats}) has no finitely completable entries and it
cannot be transformed to block diagonal form after a suitable
indexing.
\begin{figure}[htpb]
\begin{center}
\includegraphics[scale=1.2]{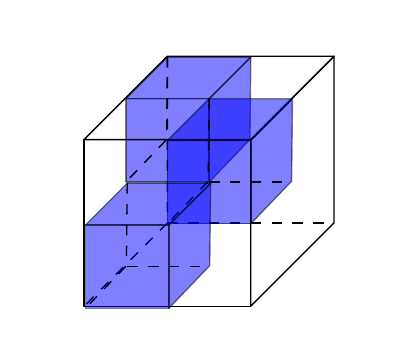}
\caption{$2 \times 2 \times 2$ tensor with observed entries
$(1,1,2),(1,2,1),(2,1,1)$}
\label{fig:222flats}
\end{center}
\end{figure}
\end{example}

\section{The algebraic boundary of the completable region}
\label{s:ramification}

The \textit{algebraic boundary} of a semialgebraic set
$S\subseteq \R^n$ is the Zariski closure of the boundary of $S$ in the
Euclidean topology.  Theorem~\ref{theorem:algebraic_boundary} is a
description of the algebraic boundary of the completable region in the
case that the parameters form probability distributions and the number
of observations equals the number of free parameters.  A result of
Sinn gives that the algebraic boundary is defined by a single
polynomial (Proposition~\ref{prop:pure_codim_one}) which we show to be
a product of a special irreducible polynomial with indeterminates.  To
this end we study the Jacobian of the parametrization map and the
factorization of its determinant.  We compute the locus
where the Jacobian has rank deficit (Propositions~\ref{p:JacobianForm}
and~\ref{prop:linear_form}), and then argue about the relation of this
set to the algebraic boundary.

The set $E \subseteq D$ again denotes the index set of observed
entries of a tensor.  Consider the restricted parametrization
\[
p:\Delta^{d_1-1} \times \cdots \times \Delta^{d_n-1} \rightarrow \R^E,
\] 
where $\Delta^{m-1}$ is the $(m-1)$-dimensional simplex that is the
convex hull of the unit vectors in~$\R^m$.  We write
$N = \{(j,k) : j \in [n], k \in [d_j]\}$ for the set indexing
dimensions of the general parametrization~\eqref{eq:parametrize} of
rank-one tensors.  In the above parametrization the parameters are
linearly dependent and this dependence, together with nonnegativity
leads to semialgebraic constraints on the image of $p$, the
\emph{completable region}.  Write
$\tilde{N} = \{(j,k) : j \in [n], k \in [d_j-1]\}$ for the index set
of linearly independent parameters. From now on, let $E \subseteq D$
be of size $\sum_{i=1}^n(d_i-1)$. With this requirement, we set the
number of observations equal to the dimension of the parameter space.

The vanishing ideal of the graph of the parametrization is
$G_E=\<x_i-p_{i}: i \in E\>$.  By assumption, the Jacobian matrix $J_E$
of $p$ is a square matrix.  We also assume that the completable region
has nonempty interior and that for each pair $(j,k) \in N$ there is at
least one element in $E$ that has $k$ at its $j$th position.  If it
has empty interior, then its algebraic boundary is just its Zariski
closure, which can be determined by eliminating the parameters from
the vanishing ideal $G_E$ of the graph of~$p$.  This happens for example
when the number of observations exceeds the dimension of the parameter
space, that is $|E| > \sum_{i=1}^n (d_i-1)$.  The assumption on the
index set $E$ is necessary so that the map $p$ captures information
about each parameter.  It is satisfied exactly if $E$ meets every
maximal dimensional slice of~$D$.  For example, for matrices this
means that $E$ meets each row and each column.

First we show that the algebraic boundary of the completable region is
defined by a single nonzero polynomial.  The following lemma is a
version of the generic smoothness lemma.  In its proof we use the same
notation as~\cite{Hart77}.
\begin{lemma}\label{l:genericSmoothness}
Let $g:\R^k\to\R^k$ be a polynomial map whose image has nonempty
interior. The Jacobian determinant is not identically zero.
\end{lemma}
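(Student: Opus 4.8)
The plan is to prove the contrapositive using generic smoothness, working in the language of \cite{Hart77}. Regard the $k$ components of $g$ as polynomials with real coefficients, and consider the induced morphism on complex points $g : \C^k \to Y$, where $Y \subseteq \C^k$ is the Zariski closure of the image $g(\C^k)$. Since $\C^k$ is irreducible, its image and hence $Y$ is an irreducible variety, and $g : \C^k \to Y$ is dominant. The Jacobian determinant $\det J_g$ is a single polynomial with real coefficients, so it vanishes identically on $\R^k$ if and only if it vanishes identically on $\C^k$; it therefore suffices to produce a single complex point where $\det J_g \neq 0$.

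The first step is to show $Y = \C^k$. By hypothesis the image $g(\R^k)$ contains a nonempty Euclidean-open set $W \subseteq \R^k$. A nonzero polynomial cannot vanish on $W$, so any polynomial that is zero on $g(\R^k)$ is the zero polynomial; hence $g(\R^k)$, and a fortiori $g(\C^k)$, is Zariski dense in $\C^k$. Consequently $Y = \C^k$ and $\dim Y = k$.

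The second step invokes the generic smoothness lemma \cite[Lemma~III.10.5]{Hart77}, which is valid in characteristic zero, applied to the dominant morphism $g : \C^k \to Y = \C^k$ with nonsingular source $\C^k$. It furnishes a nonempty Zariski-open $V \subseteq \C^k$ over which $g$ is smooth; since $g$ is dominant, $U := g^{-1}(V)$ is a nonempty open subset of $\C^k$. Because source and target are both nonsingular of dimension $k$, smoothness at a point $x \in U$ is equivalent to surjectivity of the differential $dg_x$, i.e.\ to $\rank J_g(x) = k$. Thus $\det J_g$ is nonzero at every point of $U$, and in particular $\det J_g$ is not the zero polynomial, completing the proof of the contrapositive.

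I expect the only delicate point to be the first step, namely the translation between the Euclidean and Zariski topologies (that a set with nonempty Euclidean interior is Zariski dense) together with the passage from $\R$ to $\C$ that legitimizes the appeal to the characteristic-zero generic smoothness statement. Once $\dim Y = k$ is in hand, the conclusion is a direct application of generic smoothness and the standard identification, for morphisms of smooth varieties, of smoothness with surjectivity of the differential.
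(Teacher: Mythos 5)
Your proof is correct and takes essentially the same route as the paper: both pass to the induced morphism $\mathbb{A}^k_{\C}\to\mathbb{A}^k_{\C}$, use the nonempty-interior hypothesis to get dominance, and invoke Hartshorne's generic smoothness \cite[Lem.~III.10.5]{Hart77} to produce a nonempty Zariski-open set on which the differential is an isomorphism, so $\det J_g\not\equiv 0$. The only differences are cosmetic: you spell out the real-to-complex translation and the Zariski-density argument that the paper leaves implicit, and you state the smooth locus as a preimage of an open set in the target (which is the formulation of Theorem III.10.7; Lemma III.10.5 gives the open set directly in the source), a harmless discrepancy since both apply here.
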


\begin{proof}
Let $h:\mathbb{A}^k\to\mathbb{A}^k$ be the morphism of affine
varieties given by the same polynomials as~$g$. By assumption $h$ is
dominant. Thus, by \cite[Lem. III.10.5]{Hart77} there is a nonempty
Zariski open subset $U\subseteq\mathbb{A}^k$ such that $h|_U$ is
smooth (of relative dimension zero). Thus, by
Proposition~\cite[Prop. III.10.4]{Hart77} the sheaf of relative
differentials of $U$ over $\mathbb{A}^k$ is locally free which means
that the Jacobian matrix is invertible at all points of~$U$.
\end{proof}

\begin{lemma}\label{l:polynomialImage}
Let $g: \R^k\to\R^k$ be a polynomial map.  Let $S\subseteq \R^k$ be a
semialgebraic set contained in the closure of its interior in the
Euclidean topology. If the image $g(S)$ of $S$ has nonempty interior,
then $g(S)$ is contained in the closure of its interior in the
Euclidean topology.
\end{lemma}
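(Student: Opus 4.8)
The plan is to show that an arbitrary point of $g(S)$ is a limit of interior points of $g(S)$, by pushing suitable interior points of $S$ forward through $g$ and exploiting that $g$ is a local diffeomorphism away from the Jacobian vanishing locus. First I would use the hypothesis that $g(S)$ has nonempty interior: since $g(S)\subseteq g(\R^k)$, the image of $g$ has nonempty interior, so Lemma~\ref{l:genericSmoothness} applies and $\det J_g$ is not identically zero. Consequently the set $Z=\{x\in\R^k:\det J_g(x)=0\}$ is a proper real-algebraic subset, hence closed and nowhere dense in $\R^k$. This is the single place where the dimension hypothesis (domain and target both $\R^k$) and the nonempty-interior assumption are essential.

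Next I would establish the local pushforward step: if $x\in\mathrm{int}(S)$ and $x\notin Z$, then $g(x)\in\mathrm{int}(g(S))$. Indeed, $\det J_g(x)\neq 0$, so by the inverse function theorem there is an open neighborhood $U$ of $x$ on which $g$ is a diffeomorphism onto an open set. Shrinking $U$ so that $U\subseteq\mathrm{int}(S)\subseteq S$, the set $g(U)$ is an open subset of $g(S)$ containing $g(x)$; thus $g(x)$ lies in the interior of $g(S)$.

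Finally I would run the approximation argument. Take any $y\in g(S)$ and write $y=g(x)$ with $x\in S$. By hypothesis $x\in\overline{\mathrm{int}(S)}$. Because $Z$ is nowhere dense and $\mathrm{int}(S)$ is open, the set $\mathrm{int}(S)\setminus Z$ is dense in $\mathrm{int}(S)$, so its closure contains $\overline{\mathrm{int}(S)}$ and in particular contains $x$. Hence I can choose a sequence $x_m\in\mathrm{int}(S)\setminus Z$ with $x_m\to x$. The local step gives $g(x_m)\in\mathrm{int}(g(S))$ for every $m$, while continuity of $g$ yields $g(x_m)\to g(x)=y$. Therefore $y\in\overline{\mathrm{int}(g(S))}$, which proves $g(S)\subseteq\overline{\mathrm{int}(g(S))}$.

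The argument is short once Lemma~\ref{l:genericSmoothness} is in hand, so I do not expect a serious obstacle; the one point requiring care is precisely the selection of the approximating interior points \emph{inside} $\mathrm{int}(S)$ while simultaneously \emph{avoiding} the Jacobian locus $Z$. This is exactly the step that would break down if $Z$ were allowed to be topologically large: it is the algebraicity of $Z$ (via generic smoothness), forcing it to be nowhere dense, that makes $\mathrm{int}(S)\setminus Z$ dense in $\mathrm{int}(S)$ and lets the regularity hypothesis $S\subseteq\overline{\mathrm{int}(S)}$ transfer across $g$.
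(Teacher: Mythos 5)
Your proof is correct and takes essentially the same route as the paper: both deduce from Lemma~\ref{l:genericSmoothness} that the Jacobian vanishing locus $Z$ is nowhere dense, observe that $\mathrm{int}(S)\setminus Z$ is therefore dense in $S$ (using $S\subseteq\overline{\mathrm{int}(S)}$), and push these points forward with the inverse function theorem to land in $\mathrm{int}(g(S))$, concluding by continuity. Your write-up just makes the paper's closure argument explicit via sequences.
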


\begin{proof}
Let $J$ be the Jacobian matrix of $g$. Since $g(S)$ has nonempty
interior, the Jacobian determinant $\det(J)$ is not identically zero
by Lemma~\ref{l:genericSmoothness}.  Since
$D=\{x\in\R^k: \det(J(x))=0\}$ contains no nonempty open set and by
the assumption on~$S$, the closure of
$S'=\textrm{int}(S) \smallsetminus D$ in the Euclidean topology
contains~$S$. It follows that the closure of $g(S')$ in the Euclidean
topology contains~$g(S)$. The inverse function theorem implies that
$g(S')$ is contained in the interior of~$g(S)$. Thus, the claim
follows.
\end{proof}

\begin{proposition}\label{prop:pure_codim_one}
The algebraic boundary of the completable region is of pure
codimension one, that is, it is the zero set of a nonzero polynomial.
\end{proposition}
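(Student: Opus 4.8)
The plan is to recognize the completable region as the image of a full-dimensional polytope under a polynomial self-map of $\R^k$, where $k = \sum_{i=1}^n(d_i-1) = |E|$, and then to combine Lemma~\ref{l:polynomialImage} with the structural result of Sinn on algebraic boundaries. First I would pass from the redundant parametrization to the independent coordinates indexed by $\tilde{N}$: substituting $\theta_{j,d_j} = 1 - \sum_{k=1}^{d_j-1}\theta_{j,k}$ turns $p$ into a polynomial map $g:\R^k\to\R^E$, and since $|E|=k$ this is a self-map $g:\R^k\to\R^k$. Under this substitution the domain $\Delta^{d_1-1}\times\cdots\times\Delta^{d_n-1}$ becomes a full-dimensional polytope $P\subseteq\R^k$. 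A full-dimensional polytope is a closed semialgebraic set that equals the closure of its interior, so in particular $P$ is contained in the closure of its interior, which is exactly the hypothesis on the source set in Lemma~\ref{l:polynomialImage}.

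Next I would feed this into Lemma~\ref{l:polynomialImage}. By the standing assumption of the section the completable region $g(P)$ has nonempty interior, so the lemma applies and shows that $g(P)$ is contained in the closure of its interior. Moreover $P$ is compact and $g$ is continuous, hence $g(P)$ is compact and in particular closed; combined with the previous sentence this gives $g(P)=\cl(\mathrm{int}(g(P)))$. Thus the completable region is a closed semialgebraic set (semialgebraic by Tarski--Seidenberg) that coincides with the closure of its interior.

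Finally I would invoke Sinn's result, which asserts that the algebraic boundary of a closed semialgebraic set equal to the closure of its interior is of pure codimension one. A pure codimension-one Zariski-closed subset of affine space is a hypersurface, and therefore the zero set of a single polynomial: its radical ideal has pure height one and is principal, the polynomial ring being a unique factorization domain. This defining polynomial is nonzero because $g(P)$ is compact with nonempty interior, so it is a proper nonempty subset of $\R^k$ whose Euclidean boundary is nonempty, forcing the algebraic boundary to be nonempty and hence cut out by a nonzero polynomial. The main obstacle is entirely at the level of verifying hypotheses: one must check carefully that $g$ is genuinely a polynomial self-map of $\R^k$ and that $P$ satisfies the closure-of-interior condition, since the substantive step, the passage from ``closure of its interior'' to ``pure codimension one'', is exactly the content of Sinn's theorem and may be quoted.
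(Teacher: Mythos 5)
Your proposal is correct and takes essentially the same route as the paper: the paper likewise applies Lemma~\ref{l:polynomialImage} to the parametrization (implicitly viewed as a polynomial self-map of $\R^{|E|}$), uses continuity on a compact domain to get closedness, and then invokes Sinn's Lemma~4.2 to conclude pure codimension one. The only cosmetic difference is that the paper verifies Sinn's hypothesis on the complement explicitly (the complement is nonempty and open, hence contained in the closure of its interior), whereas your closed-set formulation of Sinn's result absorbs that check automatically.
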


\begin{proof}
By \cite[Lemma 4.2]{sinn13}, if a semialgebraic set $S \subset \R^k$
is nonempty and contained in the closure of its interior in the
Euclidean topology and the same is true for its complement
$\R^k \backslash S$, then the algebraic boundary of $S$ is a variety
of pure codimension one. We will show that these assumptions hold for
the image of $p$ and its complement. The image of $p$ is clearly
nonempty. It is contained in the closure of its interior in the
Euclidean topology by Lemma~\ref{l:polynomialImage}. Also, the image
of $p$ is clearly not all of $\R^m$ since each coordinate takes a
value between $0$ and $1$ and it is closed in the Euclidean topology
since $p$ is continuous and maps from a compact space. Thus, the
complement of the image of $p$ is nonempty and open in the Euclidean
topology. Therefore, the assumptions of \cite[Lemma 4.2]{sinn13} are
satisfied and the claim follows.
\end{proof}

\subsection{The Jacobian determinant of the parametrization}
\label{s:jacobidet}
In order to find the polynomial that defines the algebraic boundary of
the completable region we compute the determinant of the
Jacobian~$J_E$ (Theorem~\ref{theorem:algebraic_boundary}).  The
following example illustrates the results in the next two subsections.

\begin{example}\label{example:bit_matrix}
Assume the indices of observed entries of a $2\times 2\times 2$ tensor
are $(2,1,1),(1,2,1),(1,1,2)$.  Denote $l_i=1-\theta_i$ for
$i=1,2,3$. Define the ideal
\[
G_E=\langle x_{211} - l_1\theta_2 \theta_3, x_{121} -
\theta_1 l_2 \theta_3,x_{112} - \theta_1 \theta_2
l_3 \rangle.
\]
The Jacobian matrix of the parametrization map equals
\[
J_E=
\begin{pmatrix}
-\theta_2 \theta_3 & l_1\theta_3 & l_1\theta_2\\
l_2\theta_3 & -\theta_1 \theta_3 & \theta_1 l_2\\
\theta_2 l_3 & \theta_1l_3 & -\theta_1 \theta_2
\end{pmatrix}
\]
and has determinant
\[
\theta_1^2 \theta_2 \theta_3 + \theta_1 \theta_2^2 \theta_3 + \theta_1 \theta_2 \theta_3^2 - 2 \theta_1 \theta_2 \theta_3= \theta_1 \theta_2 \theta_3 (-\theta_1 - \theta_2 - \theta_3 +2).
\]
The product of polynomials of the parametrization map is
$\theta_1^2 \theta_2^2 \theta_3^2 l_1 l_2 l_3$.  Division by
$\theta_1 \theta_2 \theta_3 l_1 l_2 l_3$ yields the monomial
$m = \theta_1\theta_2\theta_3$ which divides the determinant as
claimed by Proposition~\ref{p:JacobianForm}. Consider the matrix
\begin{equation*}
B_E=
\begin{pmatrix}
0 & 1 & 1 & 1\\
1 & 0 & 1 & 1\\
1 & 1 & 0 & 1
\end{pmatrix}.
\end{equation*}
The construction of this matrix is explained in
Section~\ref{subsection:linear_factor}.  The kernel of $B_E$ is
spanned by $v = (-1,-1,-1,2)^T$.  Let
$l_E=-\theta_1-\theta_2-\theta_3 + 2$ be the linear polynomial whose
coefficients are equal to the entries of~$v$, as suggested by
Proposition~\ref{prop:linear_form}.  The Jacobian determinant
equals~$l_E$ multiplied with~$m$.
\end{example}

Compared to the general parametrization~\eqref{eq:parametrize}, the
map $p$ has linear restrictions on the coordinates of its domain.  We
prove our results for a slight generalization of this.  To this end,
let $\theta_{j,k}, (j,k) \in \tilde{N}$ be indeterminates.  For any
$(j,k) \in N$, let $l_{j,k}$ be a linear polynomial in the
indeterminates~$\theta_{j,1},\dots,\theta_{j,d_j-1}$.  In our context
 $l_{j,k} = \theta_{j,k}$ for
$(j,k)\in\tilde{N}$ and
$l_{j,d_j} = 1 - \sum_{k=1}^{d_j-1}\theta_{j,k}$ otherwise. In this representation, the parametrization takes
the form
\begin{equation}\label{eq:parametrizeLinearPoly}
p_{i_1,\dots,i_n} = \prod_{j=1}^n l_{j,i_j}.
\end{equation}
We prove the following results for this generalized parametrization.
An entry of the Jacobian matrix is indexed by a pair
$((i_1,\dots,i_n),(j,k))$ of indices $(i_1,\dots,i_n) \in E$,
$(j,k) \in N$ and is given by
\[
\frac{\partial}{\partial \theta_{j,k}} \prod_{m=1}^n l_{m,i_m} =
\frac{\partial l_{j, i_j}}{\partial \theta_{j,k}} \prod_{\substack{m=1\\m\neq j}}^n l_{m, i_m}.
\]
In the Leibniz determinant formula, each summand is a product
\begin{equation}\label{eq:det_explicit}
\sum_{\sigma \in S_{|E|}} \sgn(\sigma)
\prod_{(j,k)\in N}
\frac{\partial l_{j, \sigma(i)_j}}{\partial
\theta_{j,k}}\prod_{\substack{m=1\\m\neq j}}^n
l_{m, \sigma(i)_m}.
\end{equation}
For $j\in [n]$ and $k\in [d_j]$, let
$\alpha(j,k) = |\{i\in E: i_j = k \}|$ denote the number of times, the
linear polynomials $l_{j,k}$ is used in the parametrizations~$p_i$ for
$i\in E$.  Each term, and thus the entire determinant
in~\eqref{eq:det_explicit} is divisible by the product of linear
polynomials
\begin{equation}\label{eq:prod_lin_factor}
\prod_{(j,k) \in N} l_{j,k}^{\alpha(j,k) - 1}.
\end{equation}
Consider the polynomial $l_E$ arising upon division of the Jacobian
determinant by the product~\eqref{eq:prod_lin_factor}.  We show a
degree bound on the determinant which yields that $l_E$ is either zero
or a polynomial of degree at most one.  We use some technical lemmata,
the first of which is inspired by
\cite[Lemma~4.7]{plaumann2013determinantal}.
\begin{lemma}\label{l:detDivide}
Let $\F$ be a field.
Let $M$ be an $n\times n$ matrix with entries in
$\F[x_1,\dots,x_m]$.  If an irreducible polynomial
$f \in \F[x_1,\dots,x_m]$ divides every $(r+1)$-minor of $M$, then
$f^{n-r}$ divides~$\det(M)$.
\end{lemma}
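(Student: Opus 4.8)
The plan is to reduce the statement to a local computation by passing to the discrete valuation ring attached to the irreducible $f$. Since $R := \F[x_1,\dots,x_m]$ is a unique factorization domain and $f$ is irreducible, the principal ideal $(f)$ is a height-one prime, so the localization $\mathcal{O} := R_{(f)}$ is a discrete valuation ring with uniformizer $f$. Write $v$ for the associated $f$-adic valuation, normalized by $v(f)=1$. Because $R$ is a UFD, for $g\in R$ one has $f^{t}\mid g$ in $R$ if and only if $v(g)\ge t$. Hence it suffices to prove $v(\det M)\ge n-r$, viewing $M$ as a matrix over $\mathcal{O}$.

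Next I would translate the hypothesis into a rank statement over the residue field. Let $K=\mathcal{O}/(f)=\operatorname{Frac}(R/(f))$ and let $\bar M\in K^{n\times n}$ be the reduction of $M$ modulo $f$. The assumption that $f$ divides every $(r+1)$-minor of $M$ says exactly that every $(r+1)$-minor of $\bar M$ vanishes, that is $\rank_K(\bar M)\le r$.

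The core step is then to extract a factor $f^{\,n-r}$ from this rank drop. Since $\rank_K(\bar M)\le r$, I can choose $r$ columns of $\bar M$ whose $K$-span contains all remaining columns; after reordering, take these to be the first $r$ columns. For each later column $c_j$ there are scalars $\bar\lambda_{ij}\in K$ with $\bar c_j=\sum_{i\le r}\bar\lambda_{ij}\bar c_i$. Lifting the $\bar\lambda_{ij}$ to $\mathcal{O}$, which is possible because $\mathcal{O}\twoheadrightarrow K$, and subtracting the corresponding $\mathcal{O}$-combinations of the first $r$ columns from each $c_j$ — operations that leave $\det M$ unchanged — replaces each of the last $n-r$ columns by a column all of whose entries lie in $(f)$. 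Factoring one copy of $f$ out of each of these $n-r$ columns yields $\det M=f^{\,n-r}\,\delta$ with $\delta\in\mathcal{O}$, so $v(\det M)\ge n-r$, as required. Equivalently, one may put $M$ in Smith normal form $U\,\mathrm{diag}(f^{e_1},\dots,f^{e_n})\,V$ over the principal ideal domain $\mathcal{O}$, with $U,V\in GL_n(\mathcal{O})$ and $0\le e_1\le\cdots\le e_n$; the rank bound forces at most $r$ of the $e_i$ to vanish, whence $e_{r+1},\dots,e_n\ge 1$ and $v(\det M)=\sum_i e_i\ge n-r$.

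The step I expect to be the main obstacle is the bookkeeping in the column-reduction argument: one must check that the chosen $r$ columns genuinely dominate the others over $K$ (which uses only $\rank_K(\bar M)\le r$, not equality), and that lifting the coefficients from $K$ back to $\mathcal{O}$ keeps every manipulation inside $\mathcal{O}$, so that the final cofactor $\delta$ is truly integral rather than merely a fraction with $f$-free denominator. Packaging this cleanly is precisely what the Smith normal form accomplishes, so the alternative phrasing above is the safest route to a rigorous argument. The extreme cases are automatically covered: when $r\ge n-1$ the bound is either vacuous or reduces to the single $n$-minor $\det M$ being divisible by $f$.
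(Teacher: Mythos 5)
Your proof is correct, and it takes a genuinely different route from the paper's. You localize $R=\F[x_1,\dots,x_m]$ at the height-one prime $(f)$ to obtain a discrete valuation ring $\mathcal{O}$ with uniformizer $f$, translate the hypothesis into the statement that the reduction $\bar M$ over the residue field $K=\mathrm{Frac}(R/(f))$ has rank at most $r$, and then extract $f^{n-r}$ either by lifting a column reduction from $K$ to $\mathcal{O}$ or, more cleanly, by Smith normal form over the DVR, where at most $r$ of the elementary divisors can be units. All the steps you flag as potential obstacles do go through: lifts exist because $\mathcal{O}\twoheadrightarrow K$, column operations with coefficients in $\mathcal{O}$ preserve integrality and the determinant, and divisibility by $f^t$ in $R$ is equivalent to $v\ge t$ because $R$ is a UFD. (Only cosmetic caveat: in the Smith normal form phrasing you should dispose of the case $\det M=0$ separately, since then some elementary divisor is $0$ rather than a power of $f$; the claim is trivial there anyway.) The paper instead argues by induction on $n$, following an idea of Plaumann and Vinzant: the inductive hypothesis applied to the $(n-1)$-minors gives a factorization $\adj(M)=f^{n-r-1}M'$, and the identity $\adj(\adj(M))=\det(M)^{n-2}M$ then forces, by counting the multiplicity of $f$ on both sides (with a separate easy case when $f$ divides every entry of $M$), that $f^{n-r}$ divides $\det(M)$. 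Your argument buys conceptual clarity and generality — it isolates the real mechanism, ``rank drop modulo $f$ implies valuation of the determinant,'' works verbatim over any DVR, and needs no induction — at the price of invoking localization and the structure theory of matrices over a PID. The paper's proof buys self-containedness: it never leaves the polynomial ring and uses only the adjugate identities, though its final multiplicity count ($s(n-2)<(n-r-1)(n-1)$ when $s\le n-r-1$) is the kind of delicate step your valuation-theoretic framing renders automatic.
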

\begin{proof}
Without loss of generality we can assume $\det(M)\neq 0$.  The proof
is by induction on~$n$.  For $n=1$ the statement is trivial.  If $n$
is arbitrary and $r\ge n-1$, then the assumption and the conclusion
are the same.  If $r < n-1$, the induction hypothesis yields that
$f^{n-r-1}$ divides every $(n-1)$-minor of~$M$.  Therefore the
adjugate matrix $\adj(M)$ can be factored as $\adj(M) = f^{n-r-1}M'$
for some $n\times n$ matrix~$M'$ with entries in $\F[x_1,\dots,x_m]$.
It follows that
\[
\det(M)^{n-2}M = \adj(\adj(M)) = \adj(f^{n-r-1}M') = f^{(n-r-1)(n-1)}\adj(M').
\]
If $f$ divides every entry of $M$, then $f^n$ divides $\det(M)$.  If
it does not, then $f^{(n-r-1)(n-1)}$ divides $\det(M)^{n-2}$.  This
implies that $f^{n-r}$ divides $\det(M)$: If $s$ is the power
with which $f$ appears in the factorization of $\det(M)$, then
$s\le n-r-1$ implies $s(n-2) \le (n-r-1)(n-2) < (n-r-1)(n-1)$.
\end{proof}

\begin{lemma}\label{l:det_degree}
Let $\F$ be a field and $M$ an $n\times n$ matrix whose entries
$M_{ij} \in \F[x_1,\dots,x_m]$ are not necessarily homogeneous
polynomials of degree at most $d$.  Let $\ini(M)$ be the matrix whose
$(i,j)$th entry is the standard graded leading form of the $(i,j)$th
entry of $M$.  Let $Q$ be the quotient field of $\F[x_1,\dots,x_m]$
and let $r$ be the dimension of the kernel of $\ini(M)$ considered as
a $Q$-linear map. Then $\deg(\det(M)) \le nd - r$.
\end{lemma}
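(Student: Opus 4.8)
The plan is to convert the degree bound for $\det(M)$ into a divisibility statement for a homogenized matrix, and then to extract the order of vanishing from the rank of $\ini(M)$ by means of the Smith normal form over a discrete valuation ring. The point is that a bound $\deg(\det M)\le nd-r$ asserts the vanishing of the top $r$ graded components of $\det(M)$, and graded components are best tracked by an auxiliary homogenizing variable.

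First I would introduce a new variable $t$ and homogenize each entry to degree $d$, setting $\hat M_{ij}(x,t)=t^{d}M_{ij}(x/t)\in\F[x_1,\dots,x_m,t]$, which is homogeneous of degree $d$; an entry of degree below $d$ simply acquires positive powers of $t$. This device has two properties that drive the proof. Specializing $t=1$ returns $M$, so $\det(\hat M)|_{t=1}=\det(M)$; and specializing $t=0$ retains only the degree-$d$ graded part of each entry, so $\hat M|_{t=0}=\ini(M)$ (reading $\ini(M)$ as the matrix of degree-$d$ leading forms, which agrees with the ordinary leading form exactly when the entry has full degree $d$, as happens for the Jacobian $J_E$). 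Since $\hat M$ has homogeneous entries of degree $d$, its determinant is homogeneous of degree $nd$ in $\F[x,t]$, and the coefficient of $t^{j}$ in $\det(\hat M)$ is precisely the degree-$(nd-j)$ homogeneous component of $\det(M)$. Hence $\deg(\det M)\le nd-r$ is equivalent to the single divisibility $t^{r}\mid\det(\hat M)$.

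Next I would establish this divisibility by a local argument. Viewing $\hat M$ as a matrix over the discrete valuation ring $\mathcal{O}=Q[t]_{(t)}$, whose uniformizer is $t$ and whose residue field is $Q$, its reduction modulo $t$ is $\ini(M)$, which by hypothesis has rank $n-r$ over $Q$. Putting $\hat M$ into Smith normal form over $\mathcal{O}$ (equivalently over the PID $Q[t]$), write $\hat M=U\,\mathrm{diag}(t^{a_1},\dots,t^{a_n})\,V$ with $U,V$ invertible over $\mathcal{O}$. The reduction mod $t$ then has rank equal to the number of exponents $a_i$ equal to zero, so exactly $r$ of the $a_i$ are strictly positive. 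Therefore the $t$-adic valuation of $\det(\hat M)$ equals $\sum_i a_i\ge r$, which is the required divisibility. Because $\det(\hat M)$ lies in $\F[x][t]$, divisibility by $t^{r}$ over $\mathcal{O}$ is the same as divisibility in $\F[x][t]$, so the top $r$ homogeneous components of $\det(M)$ vanish and the bound follows.

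The first step is bookkeeping; all the content is in the second. The main thing to get right is the dictionary between the $t$-order of $\det(\hat M)$ and the rank drop of its reduction: the Smith normal form makes this exact, since the invariant factors that become zero modulo $t$ are precisely those divisible by $t$, and each such factor contributes at least one to the valuation of the determinant. A secondary point worth verifying is that $U$ and $V$, being invertible over $\mathcal{O}$, stay invertible after reduction, so that $\rank(\ini(M))$ really counts the unit invariant factors; this is what pins the number of positive exponents exactly to $r$ rather than merely bounding it, and thereby yields the sharp exponent $nd-r$.
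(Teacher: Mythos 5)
Your key step---extracting $t^{r}\mid\det(\hat M)$ from the Smith normal form over the discrete valuation ring $\mathcal{O}=Q[t]_{(t)}$ with residue field $Q$---is correct, and it is a genuinely different mechanism from the paper's, which obtains the analogous divisibility $x_0^{r}\mid\det(M^h)$ from the adjugate-induction argument of Lemma~\ref{l:detDivide}; your route would make that lemma unnecessary. The gap is in your setup. Because you homogenize every entry to degree exactly $d$, the specialization $\hat M|_{t=0}$ is the matrix of degree-$d$ homogeneous components: an entry of degree strictly less than $d$ specializes to $0$, not to its leading form. So $\hat M|_{t=0}\neq\ini(M)$ in general---for $M=\left(\begin{smallmatrix}1&0\\0&x\end{smallmatrix}\right)$ and $d=1$ one has $\ini(M)=M$ of rank $2$, while $\hat M|_{t=0}=\left(\begin{smallmatrix}0&0\\0&x\end{smallmatrix}\right)$ has rank $1$---and the corank entering your valuation count is that of the wrong matrix. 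Your parenthetical fix, which redefines $\ini(M)$ and notes that all nonzero entries of the Jacobian $J_E$ have full degree, proves a special case that happens to suffice for Lemma~\ref{l:detDegreeBound}, but it does not prove the lemma as stated, which concerns arbitrary $M$ with entries of degree at most $d$.

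The repair is easy, in either of two ways. (a) Homogenize each entry to its own degree, $\hat M_{ij}=t^{\deg M_{ij}}M_{ij}(x_1/t,\dots,x_m/t)$, exactly as the paper does with $M^h$. Then $\hat M|_{t=0}=\ini(M)$ and $\hat M|_{t=1}=M$, and your DVR argument applies verbatim to give $t^{r}\mid\det(\hat M)$ in $\F[x_1,\dots,x_m][t]$. Homogeneity of $\det(\hat M)$ is lost, but you never needed it: every entry of $\hat M$ is homogeneous of degree at most $d$, so $\deg\det(\hat M)\le nd$; writing $\det(\hat M)=t^{r}g$ gives $\deg g\le nd-r$, and $\det(M)=g|_{t=1}$ has degree at most $nd-r$. (b) Keep your $\hat M$ and add the comparison $\rank(\hat M|_{t=0})\le\rank(\ini(M))$: the matrix $\hat M|_{t=0}$ is the specialization at $s=0$ of $s^{d}\,\ini(M)(x_1/s,\dots,x_m/s)$, whose rank over $\F(x_1,\dots,x_m,s)$ equals $\rank_{Q}(\ini(M))$ since $x_i\mapsto x_i/s$ is a field automorphism, and specializing $s=0$ can only decrease rank. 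Hence the corank in your count is at least $r$, and your bound (which is then in fact sharper than the stated one) implies the lemma. Two smaller points to clean up: over $Q[t]$ the Smith form diagonal entries are not pure powers of $t$ (over the local ring $\mathcal{O}$ they are, and that is what your argument uses), and the degenerate case $\det(\hat M)=0$ should be dispatched as trivial.
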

\begin{proof}
Let $M^h$ denote the matrix whose entries are the homogenizations of the entries of $M$ by a new
indeterminate~$x_0$ so that $\ini(M) = M^h_{|x_0 = 0}$.  We show that
$\det(M^h)$ is divisible by $x_0^r$.  By
assumption, all $(n-r+1)$-minors of $\ini(M)$ vanish and thus each
$(n-r+1)$-minor of $M^h$ is divisible by~$x_0$. It follows from
Lemma~\ref{l:detDivide} that $\det(M^h)$ is divisible by~$x_0^r$. Since the degree of $\det(M^h)$ is at most $nd$ and since $M=M^h_{|x_0 = 1}$, the claim follows.
\end{proof}

\begin{lemma}\label{l:detDegreeBound}
For any $E\subseteq D$ with $|E| = |\tilde{N}|$, the Jacobian
determinant~\eqref{eq:det_explicit} has degree at most $(|\tilde{N}|-1)(n-1)$.
\end{lemma}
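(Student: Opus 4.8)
The plan is to apply Lemma~\ref{l:det_degree} to the $|\tilde{N}|\times|\tilde{N}|$ matrix $J_E$. First I would record the degree of a single entry: the $(i,(j,k))$ entry equals $\frac{\partial l_{j,i_j}}{\partial\theta_{j,k}}\prod_{m\neq j}l_{m,i_m}$, in which the derivative is a constant and the remaining factor is a product of $n-1$ linear polynomials. Hence every entry has degree at most $n-1$, so Lemma~\ref{l:det_degree} with $d=n-1$ gives $\deg\det J_E\le |\tilde{N}|\,(n-1)-r$, where $r=\dim_Q\ker\ini(J_E)$ is the kernel dimension of the leading-form matrix over the quotient field $Q$. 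Everything then reduces to proving the single inequality $r\ge n-1$.

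To exploit the leading-form matrix I would first identify it explicitly. Each $l_{j,k}$ has a nonzero degree-one leading form $\ini(l_{j,k})$ (indeed $\ini(\theta_{j,k})=\theta_{j,k}$ and $\ini\bigl(1-\sum_{k}\theta_{j,k}\bigr)=-\sum_{k}\theta_{j,k}$). Since $\frac{\partial l_{j,i_j}}{\partial\theta_{j,k}}$ is a constant, the leading form of the $(i,(j,k))$ entry is $\frac{\partial l_{j,i_j}}{\partial\theta_{j,k}}\prod_{m\neq j}\ini(l_{m,i_m})$, which is homogeneous of degree $n-1$. Therefore $\ini(J_E)$ is exactly the Jacobian of the homogenized parametrization $\tilde{p}_i=\prod_{m=1}^n\ini(l_{m,i_m})$, $i\in E$, and each $\tilde{p}_i$ is homogeneous of degree one in each of the $n$ blocks of variables $\theta_{j,1},\dots,\theta_{j,d_j-1}$ separately.

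The key idea is to produce $n-1$ kernel vectors from the block-scaling symmetry of $\tilde{p}$. For $t=(t_1,\dots,t_n)\in\C^n$ with $\sum_j t_j=0$, set $v^{(t)}\in Q^{\tilde{N}}$ with entries $v^{(t)}_{(j,k)}=t_j\theta_{j,k}$. Applying Euler's relation block by block, $\sum_{k}\theta_{j,k}\,\frac{\partial\ini(l_{j,i_j})}{\partial\theta_{j,k}}=\ini(l_{j,i_j})$, I compute for every $i\in E$
\[
\sum_{(j,k)\in\tilde{N}} v^{(t)}_{(j,k)}\,\frac{\partial\tilde{p}_i}{\partial\theta_{j,k}}
=\sum_{j=1}^n t_j\,\ini(l_{j,i_j})\prod_{\substack{m=1\\ m\neq j}}^n\ini(l_{m,i_m})
=\Bigl(\sum_{j=1}^n t_j\Bigr)\tilde{p}_i=0 ,
\]
so that $v^{(t)}\in\ker\ini(J_E)$. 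The space $\{t:\sum_j t_j=0\}$ has dimension $n-1$, and the corresponding vectors are $Q$-linearly independent: reading a relation $\sum_\ell c_\ell v^{(t^{(\ell)})}=0$ in coordinate $(j,1)$ gives $\theta_{j,1}\sum_\ell c_\ell t^{(\ell)}_j=0$, hence $\sum_\ell c_\ell t^{(\ell)}=0$ in $\C^n$, forcing all $c_\ell=0$ when the $t^{(\ell)}$ are independent. This yields $r\ge n-1$, and substituting into the bound above gives $\deg\det J_E\le|\tilde{N}|\,(n-1)-(n-1)=(|\tilde{N}|-1)(n-1)$.

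I expect the main obstacle to be the step $r\ge n-1$: the per-entry degree count and the closing arithmetic are routine, but the improvement over the naive bound $|\tilde{N}|(n-1)$ rests entirely on finding enough kernel vectors of $\ini(J_E)$. The payoff of recognizing $\ini(J_E)$ as the Jacobian of the homogenized map $\tilde{p}$ is precisely that its block-multihomogeneity makes the torus symmetry (scaling each block by $\lambda_j$ with $\prod_j\lambda_j$ fixing $\tilde{p}$ when $\prod_j\lambda_j=1$) visible, and its infinitesimal version delivers exactly the $n-1$ independent kernel directions needed.
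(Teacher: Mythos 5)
Your proof is correct and takes essentially the same route as the paper's: both apply Lemma~\ref{l:det_degree} with $d=n-1$ after identifying $\ini(J_E)$ with the Jacobian of the leading forms $\ini(p_i)$, and both get the kernel bound $r\ge n-1$ from block-wise Euler relations --- your vectors $v^{(t)}$ with $\sum_j t_j=0$ are exactly the paper's relations obtained by equating the Euler identities for different $j$ (take $t=e_j-e_{j'}$). The only difference is cosmetic: you verify the $Q$-linear independence of the kernel vectors explicitly, which the paper leaves implicit.
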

\begin{proof}
Let $\ini(p_i)$ be the leading form of~$p_i$. If $\theta_{j,k}$
appears in $p_i$, then it appears in $\ini(p_i)$ with degree one.
Hence the matrix of leading forms of the Jacobian is
$\ini(J) = \left(\frac{\partial \ini(p_i)}{\partial
  \theta_{j,k}}\right)_{i,(j,k)}$.  By Lemma~\ref{l:det_degree} it
suffices to exhibit $(n-1)$ linearly independent relations among the
columns of $\ini(J)$.
As visible from~\eqref{eq:parametrizeLinearPoly}, for each
$j \in [n]$, the polynomial $\ini(p_i)$ is a homogeneous function in
the subset $\Theta_j = \{\theta_{j,k}, k\in[d_j]\}$ of the variables.
Therefore, according to Euler's theorem on homogeneous functions,
\[
\sum_{k=1}^{d_j} \theta_{j,k} \frac{\partial \ini(p_i)}{\partial
\theta_{j,k}} = \ini (p_i), \qquad j \in [n], i\in E.
\]
Equating these for different $j$ yields $n-1$ relations among the
columns of~$\ini(J)$
\[
\sum_{k=1}^{d_j} \theta_{j,k} \ini(J)_{i,(j,k)} =
 \sum_{k=1}^{d_{j'}} \theta_{j',k} \ini(J)_{i,(j',k)}, \qquad j,j' \in [n], i\in E. \qedhere
\]
\end{proof}

The degree bound in Lemma~\ref{l:detDegreeBound} together with the
divisibility by the product~\eqref{eq:prod_lin_factor} implies the
following form of the Jacobian determinant.
\begin{proposition}\label{p:JacobianForm}
The determinant of the Jacobian equals 
\[
l_E \prod_{(j,k) \in N} l_{j,k}^{\alpha(j,k) - 1},
\] 
where $l_E \in \R[\theta_{j,k}, (j,k)\in D]$ is of degree at most one.
\end{proposition}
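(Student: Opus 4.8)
The plan is to assemble the claimed factorization from two already-established facts: the divisibility of $\det(J_E)$ by the product $\prod_{(j,k)\in N} l_{j,k}^{\alpha(j,k)-1}$ (noted immediately after equation~\eqref{eq:prod_lin_factor}), and the degree bound $\deg(\det(J_E)) \le (|\tilde N|-1)(n-1)$ from Lemma~\ref{l:detDegreeBound}. Define $l_E$ to be the quotient of the Jacobian determinant by the product~\eqref{eq:prod_lin_factor}; this is a genuine polynomial precisely because of the divisibility statement. It then remains to bound $\deg(l_E)$ by one.

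First I would compute the degree of the denominator~\eqref{eq:prod_lin_factor}. Each $l_{j,k}$ is a linear polynomial in the $\theta_{j,\cdot}$, so $\deg \prod_{(j,k)\in N} l_{j,k}^{\alpha(j,k)-1} = \sum_{(j,k)\in N}(\alpha(j,k)-1)$. The key bookkeeping identity is that for each fixed $j$, summing $\alpha(j,k)$ over $k\in[d_j]$ counts every index $i\in E$ exactly once (each $i$ has a unique $j$th coordinate), so $\sum_{k\in[d_j]}\alpha(j,k) = |E| = |\tilde N|$. Subtracting the $d_j$ copies of the $-1$ and summing over $j\in[n]$ gives denominator degree $n|\tilde N| - \sum_j d_j$. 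Since $|\tilde N| = \sum_j (d_j-1) = \sum_j d_j - n$, this equals $n|\tilde N| - (|\tilde N| + n) = (n-1)|\tilde N| - n = (n-1)(|\tilde N|-1) - 1$.

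Comparing this with the degree bound gives $\deg(l_E) = \deg(\det J_E) - \deg(\text{denominator}) \le (|\tilde N|-1)(n-1) - \big[(n-1)(|\tilde N|-1)-1\big] = 1$, which is exactly the assertion that $l_E$ is either zero or of degree at most one. The only subtlety to handle carefully is the degenerate possibility $\det(J_E) \equiv 0$, in which case $l_E = 0$ and the statement holds vacuously; the degree inequality remains valid under the convention $\deg(0) = -\infty$, so no separate case analysis is needed beyond a one-line remark. I should also double-check that the degree bound of Lemma~\ref{l:detDegreeBound} applies, which requires only $|E| = |\tilde N|$—precisely our standing hypothesis on the size of $E$.

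The main obstacle here is not conceptual but arithmetic: the entire proof hinges on the degree arithmetic closing exactly, leaving a remainder of one. I expect the delicate step to be the correct evaluation of $\sum_{(j,k)\in N}(\alpha(j,k)-1)$ and its reconciliation with the Lemma~\ref{l:detDegreeBound} bound, since an off-by-one error anywhere in relating $|\tilde N|$, $\sum_j d_j$, and $n$ would destroy the sharpness. Everything else—the existence of $l_E$ as a polynomial, and the upper bound on its degree—follows formally from the two cited ingredients, so the work is to verify that these two bounds are compatible with exactly a degree-one quotient.
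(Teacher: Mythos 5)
Your proposal is correct and follows essentially the same route as the paper: define $l_E$ as the quotient of $\det(J_E)$ by the product~\eqref{eq:prod_lin_factor} (divisibility being established in the text), then combine the degree bound of Lemma~\ref{l:detDegreeBound} with the count $\sum_{(j,k)\in N}(\alpha(j,k)-1)=n|E|-|N|$ to conclude $\deg(l_E)\le 1$. Your bookkeeping identity $\sum_{k\in[d_j]}\alpha(j,k)=|E|$ and the final arithmetic match the paper's computation exactly, so there is nothing to add.
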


\begin{proof}
By Lemma~\ref{l:detDegreeBound} the degree of $l_E$ is bounded from
above by
\[
(|\tilde{N}|-1)(n-1)-\sum_{(j,k) \in N} (\alpha(j,k) - 1)=(|\tilde{N}|-1)(n-1)-n|E|+|N|.
\]
Since $|E| = |\tilde{N}|=|N|-n$, this bound equals
\[
(|E|-1)(n-1)-n|E|+|E|+n=1.\qedhere
\]
\end{proof}

\subsection{A linear factor of the Jacobian determinant}
\label{subsection:linear_factor}
To determine the linear form $l_E$ in Proposition~\ref{p:JacobianForm}
we restrict back to the most relevant case in which the linear forms
$l_{(j,k)}$ are equal to $\theta_{(j,k)}$ whenever
$(j,k) \in \tilde{N}$ and $1-\sum_k\theta_{(j,k)}$ otherwise.
Consider the matrix $A$ defining the linear
map~\eqref{eq:segre-linear-map}.  From $A$, extract the submatrix
$\tilde{B}_E$ consisting of the rows corresponding to indices
$(j,k)\in\tilde{N}$ and the columns corresponding to indices in~$E$
and let $\tilde{B}_E^T$ be its transpose.  For the entries
$b_{i,(j,k)}$ of $\tilde{B}_E^T$ this means that $b_{i,(j,k)} = 1$ if
$i$ has $k$ at its $j$th position and $b_{i,(j,k)}=0$ otherwise.  In
other words, the $(i,(j,k))$ entry of $\tilde{B}_E^T$ is $1$ if and
only if the parameter corresponding to $(j,k)$ appears in~$p_i$.  Let
\[
B_E=
\begin{pmatrix} 
& 1  \\
\tilde{B}_E^T & \vdots \\
& 1
\end{pmatrix}.
\]
Under the assumption that the completable region has non-empty
interior, $B_E$ has full rank.  Indeed, its transpose defines the
toric ideal~$I_E$ which is zero by the assumption.  In this section we
show how the kernel of $B_E$ determines the linear polynomial~$l_E$.
To this end, consider the matrix $\tilde{J}_E$ that arises from $J_E$
by dividing the $i$th row by $p_i$ for each $i \in E$, and multiplying
the column $(j,k)$ by $\theta_{k,l}$ for each $(j,k) \in
\tilde{N}$. The following lemma is immediate from
Proposition~\ref{p:JacobianForm}.
\begin{lemma}\label{l:tildeJdet}
 The determinant of $\tilde{J}_E$ equals
 \[
  \frac{l_E}{\prod_{j=1}^n (1-(\theta_{j,1}+\ldots+\theta_{j,d_j-1}))}.
 \]
\end{lemma}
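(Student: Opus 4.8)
The lemma computes the determinant of the matrix $\tilde{J}_E$, which is obtained from the Jacobian $J_E$ by two operations: dividing each row $i$ by $p_i$, and multiplying each column $(j,k)$ (for $(j,k)\in\tilde N$) by $\theta_{j,k}$. These are elementary row/column scalings, so the determinant simply scales by the reciprocals of the row divisors times the column multipliers.

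Let me work out what those scaling factors are.

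**The plan.** The proof is a direct bookkeeping computation built on top of Proposition~\ref{p:JacobianForm}. First I would record that scaling the $i$th row of a square matrix by $1/p_i$ multiplies the determinant by $1/p_i$, and scaling the column indexed by $(j,k)$ by $\theta_{j,k}$ multiplies the determinant by $\theta_{j,k}$. Hence
\[
\det(\tilde{J}_E) = \det(J_E)\cdot \frac{\prod_{(j,k)\in\tilde N}\theta_{j,k}}{\prod_{i\in E} p_i}.
\]
Next I would substitute the factorization from Proposition~\ref{p:JacobianForm}, namely $\det(J_E) = l_E\prod_{(j,k)\in N} l_{j,k}^{\alpha(j,k)-1}$, into this expression and simplify.

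**The main bookkeeping step.** The heart of the argument is to show that
\[
\frac{\prod_{(j,k)\in N} l_{j,k}^{\alpha(j,k)-1}\cdot \prod_{(j,k)\in\tilde N}\theta_{j,k}}{\prod_{i\in E}p_i}
= \frac{1}{\prod_{j=1}^n\bigl(1-(\theta_{j,1}+\dots+\theta_{j,d_j-1})\bigr)}.
\]
To see this I would use the two defining facts of this setting: the product of the parametrizations factors as $\prod_{i\in E}p_i = \prod_{i\in E}\prod_{j=1}^n l_{j,i_j} = \prod_{(j,k)\in N} l_{j,k}^{\alpha(j,k)}$, since by definition $\alpha(j,k)$ counts exactly how many $i\in E$ have $i_j=k$. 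Substituting this into the denominator collapses the powers of $l_{j,k}$: the numerator has $l_{j,k}^{\alpha(j,k)-1}$ and the denominator $l_{j,k}^{\alpha(j,k)}$, leaving $l_{j,k}^{-1}$ for every $(j,k)\in N$. Thus the whole expression becomes
\[
l_E\cdot \prod_{(j,k)\in N} l_{j,k}^{-1}\cdot \prod_{(j,k)\in\tilde N}\theta_{j,k}.
\]
Now I would invoke the specialization of the setup fixed at the start of this subsection: $l_{j,k}=\theta_{j,k}$ for $(j,k)\in\tilde N$, so the factors $\prod_{(j,k)\in\tilde N}\theta_{j,k}$ cancel the corresponding $\prod_{(j,k)\in\tilde N} l_{j,k}^{-1}$ exactly. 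What survives from $\prod_{(j,k)\in N} l_{j,k}^{-1}$ is precisely the product over the ``missing'' indices $(j,d_j)$ for $j\in[n]$, namely $\prod_{j=1}^n l_{j,d_j}^{-1}$. Since $l_{j,d_j}=1-\sum_{k=1}^{d_j-1}\theta_{j,k}$, this is exactly the stated denominator, and the numerator is $l_E$. This yields the claim.

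**Where the care is needed.** There is no real obstacle here since the lemma is stated as ``immediate''; the only point requiring attention is the careful matching of exponents of $l_{j,k}$ across the three products, and in particular tracking that the column-scaling is applied only over $\tilde N$ (not all of $N$) so that exactly the last-index factors $l_{j,d_j}$ fail to cancel. I would present this as a short chain of equalities rather than belabor it.

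\begin{proof}
Scaling the $i$th row of a square matrix by $p_i^{-1}$ and the column indexed by $(j,k)$ by $\theta_{j,k}$ multiplies the determinant by the product of these factors. Hence
\[
\det(\tilde{J}_E) = \det(J_E)\cdot \frac{\prod_{(j,k)\in\tilde N}\theta_{j,k}}{\prod_{i\in E} p_i}.
\]
By definition $\alpha(j,k) = |\{i\in E : i_j = k\}|$, so
\[
\prod_{i\in E} p_i = \prod_{i\in E}\prod_{j=1}^n l_{j,i_j} = \prod_{(j,k)\in N} l_{j,k}^{\alpha(j,k)}.
\]
Substituting this together with the factorization $\det(J_E) = l_E\prod_{(j,k)\in N} l_{j,k}^{\alpha(j,k)-1}$ from Proposition~\ref{p:JacobianForm} gives
\[
\det(\tilde{J}_E) = l_E\cdot \prod_{(j,k)\in N} l_{j,k}^{-1}\cdot \prod_{(j,k)\in\tilde N}\theta_{j,k}.
\]
In the present case $l_{j,k}=\theta_{j,k}$ for $(j,k)\in\tilde N$, so these factors cancel the corresponding terms of $\prod_{(j,k)\in N} l_{j,k}^{-1}$, leaving only the indices $(j,d_j)$ for $j\in[n]$. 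Since $l_{j,d_j}=1-(\theta_{j,1}+\dots+\theta_{j,d_j-1})$, we obtain
\[
\det(\tilde{J}_E) = \frac{l_E}{\prod_{j=1}^n\bigl(1-(\theta_{j,1}+\dots+\theta_{j,d_j-1})\bigr)}. \qedhere
\]
\end{proof}
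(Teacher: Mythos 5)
Your proof is correct and takes essentially the same approach as the paper: the paper simply declares the lemma ``immediate from Proposition~\ref{p:JacobianForm}'', and your argument is exactly the bookkeeping that justifies this, namely the row/column scaling effect on the determinant together with the identity $\prod_{i\in E}p_i=\prod_{(j,k)\in N}l_{j,k}^{\alpha(j,k)}$ and the cancellation of the $\theta_{j,k}$ factors over $\tilde{N}$.
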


\begin{lemma}\label{l:BEdet}
The determinant of $\tilde{B}_E^T$ is the constant term of $l_E$.
\end{lemma}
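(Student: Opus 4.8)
The plan is to compute the entries of $\tilde{J}_E$ explicitly, recognize that $\tilde{B}_E^T$ is precisely the specialization of $\tilde{J}_E$ at the origin $\theta=0$, and then read off the constant term of $l_E$ using Lemma~\ref{l:tildeJdet}.

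First I would write down the $(i,(j,k))$ entry of $\tilde{J}_E$ for $(j,k)\in\tilde{N}$. Starting from the Jacobian entry $\frac{\partial l_{j,i_j}}{\partial \theta_{j,k}}\prod_{m\neq j} l_{m,i_m}$, dividing the $i$th row by $p_i = \prod_m l_{m,i_m}$ cancels all factors except $l_{j,i_j}$, and then multiplying the column $(j,k)$ by $\theta_{j,k}$ gives the clean form
\[
(\tilde{J}_E)_{i,(j,k)} = \frac{\theta_{j,k}}{l_{j,i_j}}\,\frac{\partial l_{j,i_j}}{\partial \theta_{j,k}}.
\]
With the concrete choices $l_{j,k} = \theta_{j,k}$ for $k\in[d_j-1]$ and $l_{j,d_j} = 1-\sum_{k=1}^{d_j-1}\theta_{j,k}$, this entry equals $1$ when $i_j = k$, equals $-\theta_{j,k}/l_{j,d_j}$ when $i_j = d_j$, and vanishes otherwise. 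Comparing with the definition of $\tilde{B}_E^T$, whose entry $b_{i,(j,k)}$ is $1$ exactly when $i$ has $k$ at its $j$th position, I see that $\tilde{J}_E$ and $\tilde{B}_E^T$ agree outside the rows with $i_j = d_j$, where $\tilde{J}_E$ carries the extra rational term $-\theta_{j,k}/l_{j,d_j}$.

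Next I would specialize to $\theta = 0$. Each $l_{j,d_j}$ equals $1$ at the origin, so no denominator vanishes there and every entry $-\theta_{j,k}/l_{j,d_j}$ specializes to $0$, while the remaining entries are the constants $0$ and $1$. Hence $\tilde{J}_E|_{\theta=0} = \tilde{B}_E^T$, and in particular $\det(\tilde{J}_E)|_{\theta=0} = \det(\tilde{B}_E^T)$.

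Finally, Lemma~\ref{l:tildeJdet} gives $l_E = \det(\tilde{J}_E)\prod_{j=1}^n l_{j,d_j}$. Evaluating at the origin and using $\prod_{j=1}^n l_{j,d_j}|_{\theta=0} = 1$ yields $l_E|_{\theta=0} = \det(\tilde{B}_E^T)$. Since $l_E$ has degree at most one by Proposition~\ref{p:JacobianForm}, its value at the origin is exactly its constant term, which is the assertion. The only genuine work is the entry computation in the first step; once the form $\frac{\theta_{j,k}}{l_{j,i_j}}\frac{\partial l_{j,i_j}}{\partial\theta_{j,k}}$ is in hand, the identification of $\tilde{B}_E^T$ with the specialization at the origin and the application of Lemma~\ref{l:tildeJdet} are immediate, so I expect no real obstacle beyond taking care, in the case $i_j = d_j$, to note that the denominator $l_{j,d_j}$ is a unit at the origin.
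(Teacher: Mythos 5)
Your proof is correct and takes essentially the same route as the paper: the paper's proof consists precisely of the observation that $\tilde{B}_E^T$ is $\tilde{J}_E$ evaluated at $\theta=0$, followed by the application of Lemma~\ref{l:tildeJdet}. The only difference is that you verify the identification $\tilde{J}_E|_{\theta=0}=\tilde{B}_E^T$ by an explicit entry computation (including the check that the denominators $l_{j,d_j}$ are units at the origin), which the paper asserts without detail.
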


\begin{proof}
The matrix $\tilde{B}_E^T$ arises from $\tilde{J}_E$ by evaluating all
indeterminates $\theta_{(j,k)}$ at zero.  By Lemma~\ref{l:tildeJdet}
its determinant is the constant of~$l_E$.
\end{proof}

\begin{lemma}\label{l:linFormCoeff}
The coefficient of $\theta_{(j,k)}$ in $l_E$ is the determinant of the
matrix arising from $\tilde{B}_E^T$ after replacement of the
$(j,k)$-column with the all $-1$ vector.
\end{lemma}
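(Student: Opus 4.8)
The plan is to exploit the factorization supplied by Lemma~\ref{l:tildeJdet}, namely
$l_E=\det(\tilde{J}_E)\cdot\prod_{j=1}^n\bigl(1-(\theta_{j,1}+\dots+\theta_{j,d_j-1})\bigr)$,
together with the fact from Proposition~\ref{p:JacobianForm} that $l_E$ is affine-linear. Because $l_E$ has degree at most one, the coefficient of $\theta_{(j,k)}$ is the constant partial derivative $\partial l_E/\partial\theta_{(j,k)}$, which I can evaluate anywhere; I will differentiate the factorization and specialize at the origin $\theta=0$, where, by Lemma~\ref{l:BEdet}, the matrix $\tilde{J}_E$ reduces to $\tilde{B}_E^T$.

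First I would record the entries of $\tilde{J}_E$ explicitly. Abbreviating $s_{j'}=\theta_{j',1}+\dots+\theta_{j',d_{j'}-1}$, a direct computation from the definition of $\tilde{J}_E$ shows its $(i,(j',k'))$ entry to be $1$ when $i_{j'}=k'$, to be $0$ when $i_{j'}\in[d_{j'}-1]\setminus\{k'\}$, and to be $-\theta_{j',k'}/(1-s_{j'})$ when $i_{j'}=d_{j'}$; putting $\theta=0$ recovers $\tilde{B}_E^T$, as it should. Applying the product rule at the origin, the factor $\prod_{j'}(1-s_{j'})$ has value $1$ and $\theta_{(j,k)}$-derivative $-1$, contributing the term $-\det(\tilde{B}_E^T)$. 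It remains to differentiate $\det(\tilde{J}_E)$, for which I use the column-expansion (Jacobi) formula. Inspecting the entries shows that the only column whose derivative is nonzero at the origin is column $(j,k)$: in the blocks $j'\neq j$ there is no dependence on $\theta_{(j,k)}$ at all, and within block $j$ the entries of the columns $(j,k')$ with $k'\neq k$ have vanishing derivative at $\theta=0$. Along column $(j,k)$ the derivative equals $-1$ precisely in the rows $i$ with $i_j=d_j$, and $0$ otherwise. Hence $\partial_{\theta_{(j,k)}}\det(\tilde{J}_E)\big|_0$ is the determinant of $\tilde{B}_E^T$ with column $(j,k)$ replaced by $-u$, where $u$ is the indicator vector of $\{i\in E:i_j=d_j\}$.

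Combining the two contributions by multilinearity in the $(j,k)$-column yields
\[
\frac{\partial l_E}{\partial\theta_{(j,k)}}=-\det\bigl(\tilde{B}_E^T\ \text{with column }(j,k)\ \text{replaced by}\ v+u\bigr),
\]
where $v$ is the original $(j,k)$-column of $\tilde{B}_E^T$, i.e.\ the indicator of $\{i\in E:i_j=k\}$. The decisive step is now a column manipulation: since each $i\in E$ has $i_j$ equal to exactly one element of $[d_j]$, the columns $(j,k')$ for $k'\in[d_j-1]$ together with $u$ sum to the all-ones vector $\mathbf{1}$, so that $v+u-\mathbf{1}=-\sum_{k'\in[d_j-1]\setminus\{k\}}(\text{column }(j,k'))$ lies in the span of the remaining columns. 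Adding this combination of other columns leaves the determinant unchanged, so I may replace $v+u$ by $\mathbf{1}$. Pulling out the sign gives $\partial l_E/\partial\theta_{(j,k)}=-\det(\tilde{B}_E^T\text{ with column }(j,k)\to\mathbf{1})=\det(\tilde{B}_E^T\text{ with column }(j,k)\to-\mathbf{1})$, which is exactly the assertion.

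The main obstacle is the careful bookkeeping of the rational entries $-\theta_{j',k'}/(1-s_{j'})$ and, in particular, verifying that their $\theta_{(j,k)}$-derivatives vanish at the origin except along column $(j,k)$, so that the Jacobi formula collapses to a single term; the rest is the clean combinatorial identity of the last paragraph, which rests on the fact that the block-$j$ indicator columns (including the implicit $d_j$-th one) partition the rows of $\tilde{B}_E^T$.
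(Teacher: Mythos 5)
Your proof is correct, and it reaches the result by a genuinely different mechanism than the paper, although the endgame coincides. The paper's proof never differentiates: it multiplies the $(j,k)$-column of $\tilde{J}_E$ by $1-(\theta_{j,1}+\cdots+\theta_{j,d_j-1})$, specializes all parameters except $\theta_{j,k}$ to zero---so that, by Lemma~\ref{l:tildeJdet} and Proposition~\ref{p:JacobianForm}, the determinant of the resulting matrix is literally $a+b\,\theta_{j,k}$ with $a$ the constant term and $b$ the sought coefficient---then sets $\theta_{j,k}=1$ and recovers $b=(a+b)-a$ via essentially the same column operations you use at the end (subtracting the sibling columns $(j,k')$, $k'\neq k$, then splitting off $\det(\tilde{B}_E^T)=a$ by multilinearity and Lemma~\ref{l:BEdet}). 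You instead read off $b$ as $\partial l_E/\partial\theta_{j,k}$ evaluated at the origin, applying the product rule to the factorization $l_E=\det(\tilde{J}_E)\prod_{j'}(1-s_{j'})$, where $s_{j'}=\theta_{j',1}+\cdots+\theta_{j',d_{j'}-1}$, and Jacobi's formula to $\det(\tilde{J}_E)$; your verification that only the $(j,k)$-column has nonvanishing derivative at the origin, equal to $-u$ with $u$ the indicator vector of $\{i\in E: i_j=d_j\}$, plays the role of the paper's bookkeeping with the auxiliary matrices $C_1,C_2,C_3$. For an affine-linear $l_E$ the two extractions (finite difference along the $\theta_{j,k}$-axis versus derivative at the origin) carry the same information, so the divergence is one of technique: your route is more systematic and avoids the chain of specializations, at the price of invoking Jacobi's differentiation formula and of checking that the rational entries $-\theta_{j',k'}/(1-s_{j'})$ are regular at the origin (they are, since the denominators equal $1$ there); the paper's route is purely algebraic, using only evaluation and multilinearity of the determinant. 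Both arguments hinge on the same decisive combinatorial fact: the block-$j$ indicator columns together with $u$ sum to the all-ones vector, which lets the column $u+v$ (with $v$ the original $(j,k)$-column of $\tilde{B}_E^T$) be traded for the all-ones vector, and hence, after pulling out the sign, for the all $-1$ vector, without changing the determinant.
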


\begin{proof}
Let $C_1$ be the matrix obtained from multiplying the $(j,k)$-column
of $\tilde{J}_E$ with $(1-(\theta_{j,1}+\ldots+\theta_{j,d_j-1}))$.
Let $C_2$ be the matrix obtained from $C_1$ after evaluating all
parameters except for $\theta_{j,k}$ at zero.  By construction, the
determinant of $C_2$ is $a+b \theta_{j,k}$ where $a$ is the constant
term of $l_E$ and $b$ is the coefficient of $\theta_{j,k}$ in
$l_E$. Thus, the matrix $C_3$ obtained from $C_2$ by evaluating
$\theta_{j,k}$ at $1$ has determinant $a+b$.  By construction, the
$i$th entry of the $(j,k)$th column of $C_3$ is $-1$ if
$(1-(\theta_{j,1}+\ldots+\theta_{j, d_j-1}))$ appears in $p_i$ and
zero otherwise.  All other columns of $C_3$ are equal to the
respective columns of~$\tilde{B}_E^T$.  Let $C_4$ be the matrix
obtained from $C_3$ after subtracting the columns corresponding to
$(j,k')$ with $k'\neq k$ from the column $(j,k)$.  The determinant of
$C_4$ is $a+b$ and the $i$th entry of the $(j,k)$th column of $C_4$ is
$0$ if $\theta_{j,k}$ appears in $p_i$ and $-1$ otherwise.  Let $C_5$
be the matrix obtained from $C_4$ after subtracting from the $(j,k)$th
column the $(j,k)$th column of $\tilde{B}_E^T$.  By multilinearity of
the determinant and Lemma~\ref{l:BEdet}, $\det(C_5)=b$ and $C_5$ is
precisely the matrix obtained from $\tilde{B}_E^T$ after replacing the
column corresponding to $(j,k)$ with the all $-1$ vector.
\end{proof}

\begin{proposition}\label{prop:linear_form}
The kernel of $B_E$ is one-dimensional.  Let $v$ be in the kernel of
$B_E$ and let $l_v$ be the linear polynomial whose constant term is
the last entry of $v$ and whose coefficient of $\theta_{j,k}$ is the
$(j,k)$th entry of $v$. Then $l_v$ is $l_E$ multiplied with a scalar.
\end{proposition}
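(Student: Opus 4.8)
The plan is to connect the kernel of $B_E$ to the coefficients of $l_E$ using the preceding three lemmata, which already express each coefficient of $l_E$ as a determinant. First I would establish that the kernel of $B_E$ is one-dimensional. Since the completable region has nonempty interior, the map $p$ is dominant, so by Lemma~\ref{l:genericSmoothness} the Jacobian determinant is not identically zero; by Proposition~\ref{p:JacobianForm} this forces $l_E \neq 0$, hence $l_E$ is a genuine degree-one polynomial (or a nonzero constant). On the other hand $B_E$ is a square matrix of size $|E|+1 = |\tilde N|+1$: it has $|\tilde N|$ rows from $\tilde B_E^T$ together with the appended all-ones column, and $|\tilde N|+1$ columns (the $|\tilde N|$ columns of $\tilde B_E^T$ plus one more). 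Wait—I need to check the precise shape: $\tilde B_E^T$ is $|E| \times |\tilde N| = |\tilde N|\times|\tilde N|$ square, and $B_E$ adjoins one extra column of ones, making it $|\tilde N|\times(|\tilde N|+1)$, so its kernel is at least one-dimensional. To see it is exactly one-dimensional I would argue that $\tilde B_E^T$ has rank $|\tilde N|$, i.e.\ full row rank; this follows from the assumption that $B_E$ has full rank $|\tilde N|$ (stated just before Lemma~\ref{l:tildeJdet}, justified because its transpose defines the toric ideal $I_E$, which is zero since the completable region is full-dimensional).

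Next, with $\ker(B_E)$ one-dimensional, I would pin down a specific kernel vector via Cramer's rule. A vector $v = (v_{(j,k)})_{(j,k)\in\tilde N}, v_0)$ lies in $\ker(B_E)$ exactly when
\[
\sum_{(j,k)\in\tilde N} v_{(j,k)}\,(\text{column }(j,k)\text{ of }\tilde B_E^T) + v_0\cdot\mathbf{1} = 0.
\]
Because $\tilde B_E^T$ is invertible, I can solve for the $v_{(j,k)}$ in terms of $v_0$; choosing $v_0 = \det(\tilde B_E^T)$ and applying Cramer's rule to the system $\tilde B_E^T\, w = -\mathbf{1}$ (where $w_{(j,k)} = v_{(j,k)}/v_0$ up to scaling) shows that the $(j,k)$-entry of the resulting kernel vector is, up to the common sign convention, the determinant of $\tilde B_E^T$ with its $(j,k)$-column replaced by the all $-1$ vector.

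The key observation is then that these are exactly the quantities computed in Lemmata~\ref{l:BEdet} and~\ref{l:linFormCoeff}. By Lemma~\ref{l:BEdet}, the constant term of $l_E$ equals $\det(\tilde B_E^T)$, which is the chosen value of $v_0$; by Lemma~\ref{l:linFormCoeff}, the coefficient of $\theta_{(j,k)}$ in $l_E$ equals the determinant of $\tilde B_E^T$ with the $(j,k)$-column replaced by all $-1$'s, which is precisely $v_{(j,k)}$. Hence the coefficient vector of $l_E$ (constant term together with the $\theta_{(j,k)}$-coefficients) is exactly the kernel vector produced by Cramer's rule, so $l_E = l_v$ for that particular $v$. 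Since any other kernel vector is a scalar multiple of this one, every $l_v$ differs from $l_E$ by a scalar, as claimed.

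\textbf{The main obstacle} I anticipate is bookkeeping the sign and scaling conventions so that the Cramer's-rule vector matches the coefficient vector \emph{on the nose} rather than merely up to sign. The cleanest route is to avoid computing the kernel vector explicitly and instead verify directly that the vector $v^\ast$ whose entries are the constant term and coefficients of $l_E$ lies in $\ker(B_E)$: by Lemmata~\ref{l:BEdet} and~\ref{l:linFormCoeff} this amounts to checking, for each row $i$ of $B_E$, a cofactor expansion identity, namely that $\det(\tilde B_E^T)$ plus the signed sum over $(j,k)$ of the column-replacement determinants vanishes—which is exactly the Laplace expansion of a determinant with a repeated column. Once $v^\ast\in\ker(B_E)$ is confirmed and the kernel is known to be one-dimensional, the conclusion that every kernel vector yields $l_E$ up to scaling is immediate, sidestepping the sign issue entirely.
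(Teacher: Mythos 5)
Your main route (Cramer's rule) has a genuine gap: it rests on the claim that $\tilde{B}_E^T$ is invertible, and your justification --- ``this follows from the assumption that $B_E$ has full rank'' --- is not a valid inference, since a full-rank $|\tilde{N}|\times(|\tilde{N}|+1)$ matrix can perfectly well have a singular square submatrix. Worse, the claim itself is false in general. Take $n=2$, $d_1=d_2=2$, $E=\{(1,1),(2,2)\}$ (the diagonal case of Example~\ref{e:branching}, whose completable region has nonempty interior, so all standing assumptions hold): then
\[
\tilde{B}_E^T=\begin{pmatrix}1&1\\0&0\end{pmatrix},
\qquad
B_E=\begin{pmatrix}1&1&1\\0&0&1\end{pmatrix},
\]
so $\tilde{B}_E^T$ is singular; correspondingly $l_E=\theta_{1,1}-\theta_{2,1}$ has zero constant term, exactly as Lemma~\ref{l:BEdet} predicts. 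Here your system $\tilde{B}_E^T w=-\mathbf{1}$ is inconsistent (its second equation reads $0=-1$), and the kernel of $B_E$ is spanned by $(1,-1,0)$, whose last entry is $0$, so it cannot be produced by scaling a solution with $v_0=\det(\tilde{B}_E^T)$. Note also that you never needed invertibility of $\tilde{B}_E^T$ for the one-dimensionality of $\ker(B_E)$: full rank of $B_E$ itself (asserted before Lemma~\ref{l:tildeJdet}) plus rank--nullity already gives it.

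The saving grace is that the fallback you sketch in your last paragraph is correct and is, in substance, exactly the paper's proof: the paper takes $w$ to be the vector of signed maximal minors of $B_E$, observes that $B_E w = 0$ by Laplace expansion (your ``determinant with a repeated column'' identity), identifies the entries of $w$ with the constant term and the coefficients of $l_E$ via Lemmas~\ref{l:BEdet} and~\ref{l:linFormCoeff}, and finally uses $l_E\neq 0$ to conclude $w\neq 0$, hence some maximal minor is nonzero, $B_E$ has full rank, and its kernel is spanned by~$w$. That argument never inverts $\tilde{B}_E^T$ and so survives the example above. You should therefore promote the fallback to be the actual proof; the real obstacle it circumvents is not sign bookkeeping, as you suggest, but the possible vanishing of $\det(\tilde{B}_E^T)$.
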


\begin{proof}
The vector $w$ of signed maximal minors of $B_E$ is in the kernel of
$B_E$ by Laplace expansion. By Lemmas~\ref{l:BEdet}
and~\ref{l:linFormCoeff}, $l_E$ is the linear polynomial whose
constant term is the last entry of $w$ and whose coefficient of
$\theta_{j,k}$ is the $(j,k)$th entry of $w$. Since $l_E$ is not zero,
not every maximal minor of $B_E$ vanishes. Thus, its kernel is
one-dimensional and every other element of the kernel is a scalar
multiple of~$w$.
\end{proof}

\subsection{Computing the algebraic boundary from the Jacobian determinant}
According to Proposition~\ref{p:JacobianForm}, the Jacobian
determinant $\det (J_E)$ is the product of (\ref{eq:prod_lin_factor})
and the polynomial $l_E$.  Since we assume that the completable region
has nonempty interior, we get $l_E \neq 0$. The main result of this
section is the following.
\begin{theorem}\label{theorem:algebraic_boundary}
Eliminating the parameter variables from the ideal $G_E+\<l_E\>$ where
$G_E$ is the vanishing ideal of the graph of $p$ gives an ideal
generated by a non-constant irreducible polynomial $f$. The polynomial
$q$ that defines the algebraic boundary of the completable region
is the product of $f$ with some coordinates.
\end{theorem}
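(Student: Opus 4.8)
The plan is to realize the algebraic boundary of the completable region $S = p(\Delta^{d_1-1}\times\cdots\times\Delta^{d_n-1})$ as the image of the ramification locus of $p$, and to read off its two types of components from the factorization of $\det(J_E)$ in Proposition~\ref{p:JacobianForm}. Since $S$ is the continuous image of a compact set it is compact, and by Proposition~\ref{prop:pure_codim_one} its algebraic boundary is the zero set of a single polynomial $q$, which we may take squarefree. The starting observation is the inverse-function-theorem argument already used in the proof of Lemma~\ref{l:polynomialImage}: if $y=p(\theta)$ is a Euclidean boundary point of $S$, then either $\theta$ lies on the boundary of the domain or $\det(J_E)(\theta)=0$. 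I would first push both sources into an explicit variety. On the boundary of the domain some barycentric coordinate vanishes, i.e.\ some $l_{j,k}=0$; because $E$ meets every maximal slice there is then an $i\in E$ with $i_j=k$, and for that $i$ we have $p_i=0$. Hence $p(\partial(\text{domain}))$ is contained in the union of coordinate hyperplanes $\{x_i=0\}$. Inside the open domain all $l_{j,k}>0$, so by Proposition~\ref{p:JacobianForm} the critical locus there is exactly $\{l_E=0\}$. Consequently
\[
\partial S \subseteq \overline{p(\{l_E=0\})} \cup \bigcup_{i} \{x_i=0\}.
\]

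Next I would analyze the elimination ideal itself, which gives the first assertion. The graph ideal $G_E=\langle x_i-p_i : i\in E\rangle$ is prime, since $\R[\theta,x]/G_E\cong\R[\theta_{j,k}]$ is a domain. The form $l_E$ is genuinely of degree one: writing $l_E=l_v$ for $v$ in the kernel of $B_E$ as in Proposition~\ref{prop:linear_form}, if $v$ were supported only in its last coordinate then $B_E v$ would be a nonzero multiple of the all-ones column, contradicting $v\in\ker(B_E)$; hence some $\theta_{j,k}$-coefficient of $l_E$ is nonzero. Thus $l_E$ is irreducible in $\R[\theta_{j,k}]$ and $G_E+\langle l_E\rangle$ is prime. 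The elimination ideal of a prime is prime, and $V(G_E+\langle l_E\rangle)$ is isomorphic, via projection to the parameters, to the hyperplane $\{l_E=0\}$, so its image under projection to the $x$-coordinates, $\overline{p(\{l_E=0\})}$, is irreducible. Because $p$ is dominant between spaces of equal dimension it is generically finite, and the fibres of its restriction to $\{l_E=0\}$ are contained in fibres of $p$, so this image has dimension $|\tilde N|-1=|E|-1$. A height-one prime in the UFD $\R[x_i : i\in E]$ is principal, so the elimination ideal equals $\langle f\rangle$ with $f$ irreducible and, the image being a proper nonempty subvariety, non-constant.

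Combining the containment with pure codimensionality yields the shape of $q$. By Proposition~\ref{prop:pure_codim_one} every irreducible component of $V(q)$ has codimension one; by the containment above each is contained in one of the irreducible hypersurfaces $V(f)$ or $\{x_i=0\}$, and hence equals it. Thus $q$ divides $f\prod_i x_i$, that is $q=f^{\varepsilon}\prod_{i\in S'}x_i$ for some $\varepsilon\in\{0,1\}$ and $S'\subseteq E$. The delicate point — and the step I expect to be the main obstacle — is to show that $f$ actually occurs, i.e.\ $\varepsilon=1$, equivalently that $V(f)$ is a genuine component of the Euclidean boundary rather than an interior ramification wall. The plan here is a local fold analysis: at a generic real point $\theta_0$ of $\{l_E=0\}$ in the interior of the domain the differential of $p$ has corank one, and one checks that the second-order term in the kernel direction is nondegenerate, so $p$ folds and $p(\theta_0)$ lies on $\partial S$; as $\theta_0$ varies, these images are Zariski-dense in $V(f)$, forcing $V(f)\subseteq\overline{\partial S}=V(q)$. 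Justifying this fold behavior (and the existence of such interior real critical points, rather than a purely complex critical locus) is where the real work lies, the containment and the commutative-algebra steps being comparatively routine.
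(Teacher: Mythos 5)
Your containment $\partial S \subseteq \overline{p(\{l_E=0\})} \cup \bigcup_i \{x_i = 0\}$, the primality of $G_E + \langle l_E\rangle$, and the check that $l_E$ has a nonzero linear part all match the paper's proof (the paper is terser on the commutative algebra, and your kernel-vector argument for $\deg l_E = 1$ is a nice addition). But there are two genuine gaps. The smaller one: your deduction that $\overline{p(\{l_E=0\})}$ has dimension $|E|-1$ from generic finiteness of $p$ is invalid. Generic finiteness controls fibres over a dense open subset of the target, whereas $p(\{l_E=0\})$ consists precisely of critical values, i.e.\ of points over which fibre dimension may jump; a dominant, even birational, polynomial map can contract a hypersurface (e.g.\ $(x,y)\mapsto(x,xy)$ contracts $\{x=0\}$ to a point). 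If such a contraction occurred here, the elimination ideal would have height at least two and would not be principal --- so this is not a technicality but part of what the theorem asserts, and it needs a real argument.

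The larger gap is the one you flag yourself, and your fold plan cannot close it even in principle as stated: a nondegenerate fold at an interior critical point $\theta_0$ only shows that $p(\theta_0)$ lies on the boundary of the image of a small neighborhood of $\theta_0$; distant sheets of $p$ can still cover the far side of the fold image, which is exactly the ``interior ramification wall'' you mention. Excluding that is a global problem, inaccessible to local second-order analysis. The paper's idea, which you are missing, is much simpler and repairs both gaps at once, using only the compactness you already noted: $S \subseteq [0,1]^E$ is bounded and has nonempty interior, so starting from an interior point with all coordinates positive and moving in the direction $(1,\dots,1)$ one must exit $S$, and the exit point is a Euclidean boundary point with all coordinates strictly positive. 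This point lies on $V(q)$ but on no coordinate hyperplane, so by pure codimension one (Proposition~\ref{prop:pure_codim_one}) the component of $V(q)$ through it is a hypersurface contained in the irreducible set $\overline{p(\{l_E=0\})}$, hence equal to it. That single identity shows $\dim \overline{p(\{l_E=0\})} = |E|-1$ (so the elimination ideal is a height-one prime in a polynomial ring, i.e.\ principal with non-constant irreducible generator $f$) and that $f$ divides $q$. Note the direction of the argument: the paper goes from a boundary point to the critical locus via the inverse function theorem, rather than trying to prove that critical values land on the boundary, which is the step your proposal cannot supply.
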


\begin{proof}
If a point on the boundary of the completable region is the image of a
point of the interior of the parameter space, the inverse function
theorem implies that the Jacobian determinant vanishes at this
point. Since the image of $p$ is closed in the Euclidean topology, its
boundary is contained in the image of the union of the boundary of the
parameter space with the zero set of the Jacobian determinant. By the
assumptions on~$E$, the boundary of the parameter space is mapped to a
subset of the union of the coordinate hyperplanes. The same holds for
the components of the zero set of the Jacobian determinant
corresponding to the factors of~\eqref{eq:prod_lin_factor}. Thus the
algebraic boundary is contained in the union of the coordinate
hyperplanes and the image of the zero set of $l_E$ (which is
irreducible). It cannot be contained in the union of the coordinate
hyperplanes, since the image has nonempty intersection with the
positive quadrant and is bounded by the hyperplane of entries summing
to one. Furthermore, since the algebraic boundary is of pure
codimension one by Proposition~\ref{prop:pure_codim_one}, the claim
follows.
\end{proof}

\begin{remark}
One could ask for a variant of
Theorem~\ref{theorem:algebraic_boundary} in which nonnegativity is not
imposed on the domain of $p$.  In this case the proof of
Proposition~\ref{prop:pure_codim_one} fails since the image of $p$
need not be closed.  One could consider the same problem under the
additional assumption that the map $p$ is proper.  Then the proofs
show that the algebraic boundary is still given as the zero set of a
factor of the product of $f$ with the coordinates (the possibility of
the factor being 1 is not excluded).
\end{remark}

In Section~\ref{s:diag-semialg-solution} we find the complete
semialgebraic description of tensors of format
$d\times \dots \times d$ with observed diagonal entries.  For general
$E$ this may be too hard, but it would be interesting to understand
the behavior of the degree of the boundary hypersurface.

\begin{problem}\label{prob:degree}
Determine the degree of the irreducible polynomial $f$ in
Theorem~\ref{theorem:algebraic_boundary} as a function of
$n,d_1,\dots,d_n$, and~$E$.
\end{problem}

\begin{example}\label{e:boundary}
We continue Example~\ref{example:bit_matrix}.
\begin{equation*}
G_E+\langle l_E \rangle = \langle x_{211} - l_1\theta_2 \theta_3, x_{121}
- \theta_1 l_2 \theta_3,x_{112} - \theta_1 \theta_2 l_3, -\theta_1 - \theta_2 - \theta_3 +2\rangle.
\end{equation*}
Eliminating $\theta_1, \theta_2$, and $\theta_3$ yields a prime ideal
generated by
\begin{multline*}
x_{211}^4x_{121}^2-2x_{211}^3x_{121}^3+x_{211}^2x_{121}^4-2x_{211}^4x_{121}x_{112}+2x_{211}
     ^3x_{121}^2x_{112}+2x_{211}^2x_{121}^3x_{112}\\
-2x_{211}x_{121}^4x_{112}+x_{211}^4x_{112}^2+2x_{211}
     ^3x_{121}x_{112}^2-6x_{211}^2x_{121}^2x_{112}^2+2x_{211}x_{121}^3x_{112}^2+x_{121}^4x_{112}^2\\
-2
     x_{211}^3x_{112}^3+2x_{211}^2x_{121}x_{112}^3+2x_{211}x_{121}^2x_{112}^3-2x_{121}^3x_{112}^3+
     x_{211}^2x_{112}^4-2x_{211}x_{121}x_{112}^4\\
+x_{121}^2x_{112}^4-2x_{211}^3x_{121}^2-2x_{211}^2
     x_{121}^3+8x_{211}^3x_{121}x_{112}-4x_{211}^2x_{121}^2x_{112}+8x_{211}x_{121}^3x_{112}\\
-2x_{211}^3
     x_{112}^2-4x_{211}^2x_{121}x_{112}^2-4x_{211}x_{121}^2x_{112}^2-2x_{121}^3x_{112}^2-2x_{211}^2
     x_{112}^3+8x_{211}x_{121}x_{112}^3\\
-2x_{121}^2x_{112}^3+x_{211}^2x_{121}^2-10x_{211}^2x_{121}x_{112}
     -10x_{211}x_{121}^2x_{112}+x_{211}^2x_{112}^2-10x_{211}x_{121}x_{112}^2\\
+x_{121}^2x_{112}^2+4x_{211}
     x_{121}x_{112}.
\end{multline*}
The zero set of this polynomial together with the coordinate
hyperplanes is the algebraic boundary of the set of
$2 \times 2 \times 2$ partial tensor with specified entries at
positions $(2,1,1),(1,2,1)$ and $(1,1,2)$ which can be completed to a
rank-one tensor inside the standard simplex, see
Figure~\ref{fig:exampleAlgebraicBoundary}.
\begin{figure}[htpb]
\centering
\includegraphics[width=0.4\textwidth]{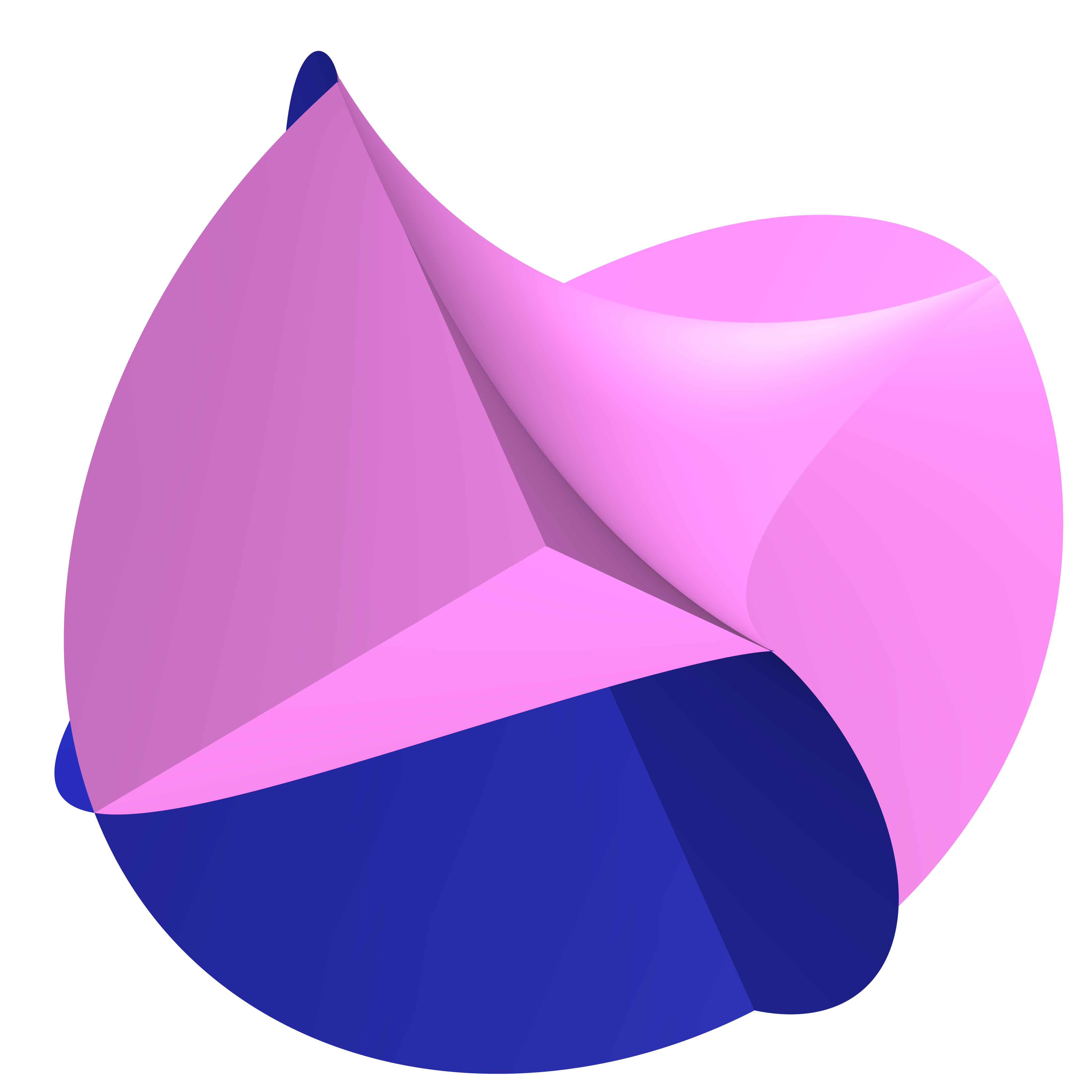}
\caption{The irreducible surface that is part of the boundary of the
completable region in Example~\ref{e:boundary}.  The completable
triples inside $[0,1]^3$ reside below the bent triangular shape.  The
surface is singular along the coordinate axes and the sides of the
bent triangle.  The algebraic boundary of the completable region
also includes the coordinate hyperplanes since the bounded region
below the bent triangle also extends into negative coordinates.}
\label{fig:exampleAlgebraicBoundary}
\end{figure}
\end{example}

Next to elimination, Sturm sequences provide another method to
retrieve information about the algebraic boundary.  They work directly
with the image coordinates and could yield lower complexity algorithms
to produce the boundary of the completable region.  The nature of the
construction of Sturm's sequence warrants hope that this would yield
some control over the degree in Problem~\ref{prob:degree}.  We present
an example illustrating the method.

\begin{example}\label{e:sturm-boundary}
As in Examples~\ref{example:bit_matrix} and~\ref{e:boundary}, consider
$2 \times 2 \times 2$ partial tensors with three observed entries
$x_{112},x_{121}, x_{211}$.  As argued in
Example~\ref{e:finiteComplete}, the entry at the position $(1,1,1)$ is
not finitely completable.  Let $x$ be an indeterminate standing for
this entry.  After picking $x$, the remaining values of the tensor all
satisfy algebraic equations in the given entries and~$x$.  The slices
of the tensor $T$ then are
\[
\begin{pmatrix}
x & x_{121} \\[3mm]
x_{211} & \frac{x_{121}x_{211}}{x}
\end{pmatrix}
\begin{pmatrix}
x_{112} &  \frac{x_{112}x_{121}}{x} \\[3mm]
\frac{x_{112}x_{211}}{x} & \frac{x_{112}x_{121}x_{211}}{x^2}
\end{pmatrix}.
\]
If $T$ is a probability tensor, then its entries should sum to one.
  Let
$e_i,$ be the $i$th elementary symmetric function on the letters
$x_{112},x_{121},x_{211}$. This leads
to the following constraint on~$x$
\begin{equation*}
x + e_1 + \frac{e_2}{x} + \frac{e_3}{x^2} - 1=0.
\end{equation*}
To find conditions which guarantee the existence of real or positive
solutions $x$ we examine the Sturm sequence of this constraint after
clearing denominators.  The first three polynomials in the Sturm
sequence are
\begin{gather*}
f_0(x) = \theta(x) = x^{3} + (e_1 - 1) x^{2} + e_2 x + e_3,\\
f_1(x) = \theta'(x) = 3 x^{2} + 2 (e_1 -1) x + e_2,\\
f_2(x) = \frac{2}{9}\left(e_{1}^2 - 3e_2 - 2e_1 + 1 \right) x + \frac{1}{9} \left(e_{2}e_{1} - e_2  - 9 e_3 \right).
\end{gather*}
The constant $f_3 = -\text{rem}(f_1,f_2)$ in the Sturm sequence is a
longish quotient of two polynomials in the elementary symmetric
polynomials~$e_1,e_2,e_3$.  We omit printing it here, since it can be
reproduced easily with computer algebra.  To apply Sturm's
theorem~\cite[Theorem~1.4]{sturmfels02:_solvin_system_polyn_equat}, we
evaluate at $x=0,1$.  Assuming $e_1 \leq 1$,
\[
f_0(0) = e_3 \geq 0,\quad f_1(0) = e_2 \geq 0,\quad f_2(0) = -e_2(1 -
e_1) - 9e_3 \leq 0.
\]
Let $\sigma$ be the sign of the constant~$f_3$.  At $x=1$ we find
\[
f_0(1)=e_1+e_2+e_3 \geq 0 \quad \text{and} \quad f_1(1)=1+2e_1+e_2 \geq 0.
\]
Denote the sign of $ f_2(1)=2e_1^2 - 4e_1-7e_2+2+e_1 e_2 - 9e_3$ by
$\mu$.  Assuming that $x_{112},x_{121},x_{211}$ are in the interior of
$\Delta^2$, the sign sequence at zero is $++-\ \sigma$ and at one is
$++\mu\ \sigma$.  According to Sturm's theorem $f_0(x)$ has a root in
the half-open interval $(0,1]$ if and only if $\mu = +$ and
$\sigma=+$.  Hence the completable region in the interior of
$\Delta^2$ is defined by
$x_{112}> 0, x_{121} > 0, x_{211} > 0, 1- e_1 > 0,f_2(1) \geq 0$ and
$f_3 \geq 0$.  By Theorem~\ref{theorem:algebraic_boundary}, a single
irreducible polynomial in the $x_e, e\in E$ together with coordinate
hyperplanes gives the algebraic boundary.  Explicit computation shows
that the numerator of $f_3$ equals a scalar multiple of the generator
of the ideal in Example~\ref{e:boundary}.
\end{example}

\section{Completability of diagonal partial probability tensors}
\label{s:diag-semialg-solution}

We give a semialgebraic description of the region of diagonal partial
tensors that can be completed to rank-one probability tensors.  The
following theorem is our starting point and appeared
as~\cite[Proposition~5.2]{KubjasRosen}.

\begin{theorem}
Let
$E=\{(1,\ldots,1),(2,\ldots,2),\ldots,(d,\ldots,d)\} \subseteq [d]^n$.
A diagonal partial tensor $T_E \in \R^E_{\geq 0}$ is completable to a
rank-one tensor in $\Delta^{d^n-1}$ if and only if
\[
\sum_{i=1}^d x_{i,\dots,i}^{\frac{1}{n}} \leq 1.
\]
\end{theorem}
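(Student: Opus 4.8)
The plan is to prove both directions directly from the rank-one parametrization, using the reduction that a completion inside $\Delta^{d^n-1}$ may be taken with nonnegative factors that are, moreover, probability vectors. Suppose first (\emph{necessity}) that $T=\theta_1\otimes\dots\otimes\theta_n$ lies in $\Delta^{d^n-1}$ and restricts to $T_E$. Since $T$ has nonnegative entries and is not the zero tensor, I would first make the factors nonnegative: fixing an index $a$ with $T_a>0$ and flipping the sign of each $\theta_j$ according to $\sgn(\theta_{j,a_j})$ leaves $T$ unchanged (the signs multiply to $\sgn(T_a)=1$) and makes every $\theta_{j,a_j}>0$; then each entry $\theta_{j,k}$ shares the sign of the nonnegative tensor entry obtained from $a$ by replacing its $j$th coordinate with $k$, so $\theta_j\ge 0$. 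The sum of all entries of $T$ equals $\prod_{j=1}^n s_j$ with $s_j=\sum_k\theta_{j,k}$, hence $\prod_j s_j=1$; rescaling $\theta_j\mapsto\theta_j/s_j$ (a rescaling of total factor $1$) fixes $T$ and makes each $\theta_j$ a probability vector. Writing $x_{i,\dots,i}=\prod_{j=1}^n\theta_{j,i}$ and applying AM--GM coordinatewise gives $x_{i,\dots,i}^{1/n}\le \tfrac1n\sum_{j=1}^n\theta_{j,i}$; summing over $i$ and using $\sum_i\theta_{j,i}=1$ for each $j$ yields $\sum_{i=1}^d x_{i,\dots,i}^{1/n}\le 1$.

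For \emph{sufficiency}, set $c_i=x_{i,\dots,i}\ge0$, $y_i=c_i^{1/n}$, and $S=\sum_i y_i\le 1$. It suffices to produce nonnegative factors with column products $\prod_j\theta_{j,i}=c_i$ and row-sum product $\prod_j s_j=1$, since these conditions say exactly that $\theta_1\otimes\dots\otimes\theta_n$ is a rank-one probability tensor with the prescribed diagonal. The baseline choice $\theta_{j,i}=y_i$ for all $j$ already realizes the diagonal but gives $\prod_j s_j=S^n\le 1$, so the task reduces to \emph{enlarging} this product up to $1$ without disturbing the column products. If $S=0$ I would instead take $\theta_1=e_1$, $\theta_2=e_2$ (standard basis vectors of $\R^d$) and arbitrary probability vectors for the remaining factors, producing a probability tensor with vanishing diagonal. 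If $S>0$ and some $c_{i_0}=0$, then (for $n\ge 2$) the column product at $i_0$ vanishes regardless of $\theta_{1,i_0}$, so that coordinate is a free nonnegative slot: keeping $\theta_j=y$ for $j\ge 2$ and setting $\theta_1=y+t\,e_{i_0}$ with $t=S^{1-n}-S$ gives $\prod_j s_j=(S+t)S^{n-1}=1$, and $t\ge 0$ because $S\le 1$.

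The hard part is the remaining case, in which every $c_i>0$ and hence no free coordinate is available to absorb the deficit $1-S^n$. Here I would perturb two factors simultaneously along column $1$: keep $\theta_j=y$ for $j\ge 3$ and set $\theta_1=(y_1\rho,\,y_2,\dots,y_d)$ and $\theta_2=(y_1/\rho,\,y_2,\dots,y_d)$ for a parameter $\rho\ge 1$. Every column product is preserved, since $\rho$ and $\rho^{-1}$ cancel in column $1$ and the other columns are untouched, so the diagonal stays correct while
\[
\prod_{j=1}^n s_j=S^{n-2}\,\bigl(y_1\rho+S-y_1\bigr)\bigl(y_1\rho^{-1}+S-y_1\bigr).
\]
This expression equals $S^n\le 1$ at $\rho=1$ and tends to $\infty$ as $\rho\to\infty$, using $d\ge 2$ and all $c_i>0$ so that $S-y_1=\sum_{i\ge 2}y_i>0$. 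Since it is continuous in $\rho$, the intermediate value theorem supplies a $\rho^\star\ge 1$ with $\prod_j s_j=1$, which completes the construction.

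I expect the main obstacle to be exactly this last case: when the observed diagonal is strictly positive there is no slack coordinate, and the deficit must be taken up by an honest one-parameter deformation rather than an explicit formula, so the argument becomes a continuity statement instead of a closed-form construction. The only remaining bookkeeping is to verify nonnegativity of all factors and $t\ge 0$ in each branch (all consequences of $S\le 1$) and to confirm the stated limit as $\rho\to\infty$; the analytic input throughout is no more than AM--GM and a single intermediate value argument.
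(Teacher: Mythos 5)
Your proof is correct, and there is an important structural point: the paper itself does not prove this statement at all --- it imports it verbatim from \cite[Proposition~5.2]{KubjasRosen} as the starting point of its Section~4 --- so there is no in-paper argument to compare against, and your write-up is a genuine self-contained proof. Your route is the natural direct one. For necessity, the reduction to stochastic factors is sound: sign-flipping each $\theta_j$ by $\sgn(\theta_{j,a_j})$ at a positive entry $T_a$ leaves $T$ fixed and makes all factors nonnegative, the identity $\sum_{i\in D} T_i = \prod_j s_j = 1$ lets you normalize each $\theta_j$ to a probability vector, and AM--GM then yields $\sum_i x_{i,\dots,i}^{1/n} \leq 1$. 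For sufficiency, your three-way case split is exhaustive and each branch checks out: the baseline $\theta_{j,i}=y_i$ realizes the diagonal with total mass $S^n \leq 1$; a vanishing diagonal entry provides a free coordinate that absorbs the deficit exactly, with $t = S^{1-n}-S \geq 0$ because $0 < S \leq 1$; and in the strictly positive case the $(\rho,\rho^{-1})$-deformation of two factors preserves every column product while the total mass $S^{n-2}\bigl(y_1\rho + S - y_1\bigr)\bigl(y_1\rho^{-1}+S-y_1\bigr)$ runs continuously from $S^n \leq 1$ to $\infty$ (using $S-y_1>0$), so the intermediate value theorem finishes. The only caveats worth recording: your construction uses two distinct tensor factors and two disjointly supported basis vectors, hence implicitly $n \geq 2$ and $d \geq 2$; these are consistent with the paper's standing assumption that each $d_i \geq 2$, and they are genuinely necessary, since the statement is false for $n=1$ or $d=1$, where completability forces the equality $\sum_i x_i = 1$, respectively $x_{1,\dots,1}=1$, rather than the stated inequality.
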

Denote
 \[
S_{n,d}=\{x \in \R_{\geq 0}^d: \sum_{i=1}^d x_{i,\dots,i}^{\frac{1}{n}} \leq 1 \}.
\]
It was shown in~\cite{KubjasRosen} that $S_{n,d}$ is a semialgebraic
set and a description of its algebraic boundary was given. We show for
any integers $n,d \geq 1$ that the set is a basic closed semialgebraic
set and we construct the defining polynomial inequalities.  We prepare
some lemmata about real zeros of polynomials $f\in\R[t]$.  To this end
let $f^{(i)}$ denote the $i$th derivative of~$f$.

\begin{lemma}\label{lemma:sa1}
Let $f \in \R[t]$ be a monic polynomial of degree~$d$.  Let
$\epsilon \in \R$ such that $f^{(i)}(\epsilon) \geq 0$ for all
$i=0,\ldots,d-1$. Then $\epsilon \geq \alpha$ for every real zero
$\alpha \in \R$ of $f$.
\end{lemma}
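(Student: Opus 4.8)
The plan is to exploit the finite Taylor expansion of $f$ about the point $\epsilon$. Since $f$ is a polynomial of degree $d$, Taylor's formula is exact and reads
\[
f(t) = \sum_{i=0}^{d} \frac{f^{(i)}(\epsilon)}{i!}\,(t-\epsilon)^i .
\]
I would then restrict attention to $t > \epsilon$ and read off the sign of $f(t)$ directly from this expansion, rather than arguing about the location of the zeros one by one.

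The decisive observation is that monicity of $f$ controls not only the derivatives $f^{(0)},\dots,f^{(d-1)}$ supplied by the hypothesis, but also the top one: the $d$th derivative of a monic degree-$d$ polynomial is the constant $f^{(d)} \equiv d!$, so $f^{(d)}(\epsilon) = d! > 0$. Consequently $f^{(i)}(\epsilon) \geq 0$ holds for every $i = 0,\dots,d$, with strict inequality at $i = d$. For $t > \epsilon$ each factor $(t-\epsilon)^i$ is positive, so every summand in the Taylor expansion is nonnegative and the summand with $i = d$ equals $(t-\epsilon)^d > 0$. Hence $f(t) > 0$ for all $t > \epsilon$, which means $f$ has no real zero strictly exceeding $\epsilon$. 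Equivalently $\epsilon \geq \alpha$ for every real zero $\alpha$ of $f$, as claimed.

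An alternative I would keep in reserve is induction on $d$, passing from $f$ to the monic polynomial $f'/d$ of degree $d-1$: the hypothesis on $f^{(1)},\dots,f^{(d-1)}$ becomes precisely the hypothesis for $f'/d$, so by induction every real critical point of $f$ lies at or to the left of $\epsilon$; since $f' \to +\infty$, this forces $f' > 0$ on $(\epsilon,\infty)$, and together with $f(\epsilon) \geq 0$ gives $f > 0$ there. This route also handles the base case $d = 1$ transparently. Either way there is no serious obstacle here: the only point requiring care is to notice that monicity is exactly what upgrades the hypothesis (nonnegativity up to order $d-1$) into strict positivity of the leading Taylor coefficient, which is what rules out a real zero strictly greater than $\epsilon$ and yields $f(t) > 0$ for $t > \epsilon$.
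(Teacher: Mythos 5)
Your proof is correct, and it takes a genuinely different route from the paper. The paper argues by induction on the degree: it looks at the two largest real zeros of $f$, produces a critical point between them via Rolle's theorem (or uses a double root directly), applies the induction hypothesis to $f'$, and finishes with a sign-change argument using monicity. Your argument avoids induction and case analysis entirely: in the exact Taylor expansion of $f$ about $\epsilon$, the hypothesis makes every coefficient of order $<d$ nonnegative, while monicity forces the top coefficient $f^{(d)}(\epsilon)/d! = 1$ to be strictly positive, so $f(t) \geq (t-\epsilon)^d > 0$ for all $t > \epsilon$, and no real zero can exceed $\epsilon$. This is shorter and yields the slightly stronger conclusion that $f$ is strictly positive on $(\epsilon,\infty)$. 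It is worth noting that your expansion trick is precisely the device the paper itself uses later, in the proof of Lemma~\ref{l:largerThanRealPart}, where $f$ is written in powers of $t-\alpha$ with nonnegative coefficients; your proof in effect unifies the two arguments. Your reserve argument (induction through $f'/d$) is essentially the paper's proof in a cleaner normalization --- the paper applies the inductive statement to $f'$, which is not monic but only has positive leading coefficient, a harmless but unremarked point that your rescaling fixes explicitly.
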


\begin{proof}
The statement is true when the number of real zeros (counted with
multiplicity) of $f$ is at most one because $f$ is monic.  We proceed
by induction on the degree~$d$.  By the above observation the case
$d\leq 1$ is clear.  Let $d\geq 2$ and let $f$ have $e \geq 2$ real
zeros $\alpha_1 \leq \ldots \leq \alpha_e$ (counted with
multiplicity).  If $\alpha_{e-1}=\alpha_{e}$ is a double root of~$f$,
it is also a root of $f'$ and by the induction hypothesis,
$\epsilon \geq \alpha_e$.  If $\alpha_{e-1} < \alpha_{e}$, then by
Rolle's theorem there is a $\beta \in \R$ with
$\alpha_{e-1} < \beta < \alpha_e$ and $f'(\beta)=0$.  Thus by
induction hypothesis, $\epsilon \geq \beta$.  Since $\alpha_e$ is a
simple root, $f$ has a change of signs at $\alpha_e$.  Since $f$ is
monic, it is negative between $\beta$ and $\alpha_e$, and thus
$\epsilon \geq \alpha_e$.
\end{proof}

\begin{lemma}\label{l:connI}
 Let $f \in \R[t]$ be a monic polynomial of degree $d$.
 The set 
 \[
  I=\{\epsilon \in \R: f^{(i)}(\epsilon) \geq 0 \textrm{ for all } i=0,\ldots,d-1 \}
 \]
 is connected and thus a closed, unbounded interval. 
\end{lemma}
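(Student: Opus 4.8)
The plan is to show directly that $I$ equals a half-line $[\rho,\infty)$, which simultaneously establishes that it is closed, unbounded above, and (being an interval) connected. First I would record the easy structural facts: $I$ is closed, being the intersection of the finitely many closed sets $\{f^{(i)}\geq 0\}$, and it is unbounded above, since each derivative $f^{(i)}$ has positive leading coefficient $d!/(d-i)!$ and hence is eventually positive, so $f^{(i)}(\epsilon)>0$ for all $i$ once $\epsilon$ is large. Thus everything reduces to identifying $I$ as an interval.

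The key input is Lemma~\ref{lemma:sa1}, which I would apply not to $f$ itself but to each of its derivatives after normalizing them to be monic. For $i\in\{0,\dots,d-1\}$ write $f^{(i)} = c_i\, h_i$ with $c_i = d!/(d-i)!>0$ and $h_i$ monic of degree $d-i$. Because $h_i^{(j)}$ is a positive scalar multiple of $f^{(i+j)}$, the hypotheses of Lemma~\ref{lemma:sa1} for $h_i$ at a point $\epsilon\in I$ --- namely $h_i^{(j)}(\epsilon)\geq 0$ for $j=0,\dots,(d-i)-1$ --- are exactly the inequalities $f^{(i+j)}(\epsilon)\geq 0$ with $i+j\leq d-1$, all of which hold by the definition of $I$. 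Lemma~\ref{lemma:sa1} then tells me that every $\epsilon\in I$ is at least as large as every real zero of $f^{(i)}$, for every $i$.

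Let $\rho$ denote the maximum, over $i\in\{0,\dots,d-1\}$, of the largest real zero of $f^{(i)}$ (ignoring those $f^{(i)}$ that have no real zero); this maximum is finite because $f^{(d-1)}$ is linear and so certainly has a real zero. The previous step gives the inclusion $I\subseteq[\rho,\infty)$. For the reverse inclusion I would verify that $f^{(i)}(t)\geq 0$ whenever $t\geq\rho$: if $t$ exceeds the largest real zero of $f^{(i)}$ then $f^{(i)}(t)>0$ since the leading coefficient is positive; if $t$ equals it then $f^{(i)}(t)=0$; and if $f^{(i)}$ has no real zero then it has even degree and positive leading coefficient, hence is everywhere positive. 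Therefore $[\rho,\infty)\subseteq I$, and combining the two inclusions yields $I=[\rho,\infty)$, a closed, unbounded, connected interval.

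The only delicate bookkeeping --- and the step I expect to require the most care --- is the case analysis in the last paragraph distinguishing derivatives that do or do not possess real zeros, together with confirming that the boundary point $\rho$ itself lies in $I$. Once the normalization to monic polynomials legitimizes the application of Lemma~\ref{lemma:sa1}, the remainder is merely a matter of tracking largest zeros, and no further machinery is needed.
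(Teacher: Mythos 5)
Your proof is correct, but it takes a genuinely different route from the paper's. The paper establishes connectedness by contradiction: if $a<b<c$ with $a,c\in I$ but $b\notin I$, then some $f^{(i)}$ is negative at $b$ while nonnegative at $a$ and $c$, so the intermediate value theorem gives zeros of $f^{(i)}$ on both sides of $b$, and Rolle's theorem then produces a zero $\xi>a$ of $f^{(i+1)}$, contradicting Lemma~\ref{lemma:sa1} applied to $f^{(i+1)}$ at the point $a$; closedness and unboundedness are argued separately, exactly as in your first paragraph, and the interval itself is never identified. Your argument is instead constructive: applying Lemma~\ref{lemma:sa1} to every derivative (after normalizing to a monic polynomial), you pin down $I=[\rho,\infty)$, where $\rho$ is the largest real zero occurring among $f^{(0)},\dots,f^{(d-1)}$, and you verify the reverse inclusion $[\rho,\infty)\subseteq I$ by elementary sign considerations, including the case of derivatives with no real zeros. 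This buys strictly more than the paper's proof: connectedness, closedness, unboundedness, and membership of the endpoint all fall out of the single identity $I=[\rho,\infty)$, and the explicit endpoint anticipates Lemma~\ref{l:largerThanRealPart}, where for $f$ with a dominant real root $\alpha$ the endpoint is shown to equal $\alpha$ (consistent with your $\rho$, by Gauss--Lucas). A minor point in your favor: you make explicit the normalization of $f^{(i)}$ to a monic polynomial before invoking Lemma~\ref{lemma:sa1}, a step the paper (which applies the lemma to the non-monic $f^{(i+1)}$ directly) leaves implicit.
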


\begin{proof}
Assume that there are real numbers $a < b < c$ such that $a,c \in I$
but $b \not\in I$.  There is an $1 \leq i <d$ such that
$f^{(i)}(b)<0$. Since $f^{(i)}(a)$ and $f^{(i)}(c)$ are nonnegative,
by Rolle's theorem and the intermediate value theorem, there is a
$\xi>a$ with $f^{(i+1)}(\xi)=0$. This contradicts $a \in I$ by
Lemma~\ref{lemma:sa1} applied to $f^{(i+1)}$.

The interval $I$ is closed because a finite number of polynomials
being nonnegative is a closed condition. It is unbounded because the
defining polynomials are monic and thus nonnegative for sufficiently
large $t$.
\end{proof}

\begin{lemma}\label{l:largerThanRealPart}
If a polynomial $f$ has a real zero $\alpha \in \R$ that is larger
than the real part of any other zero of~$f$, then
\[
\{\epsilon \in \R: f^{(i)}(\epsilon) \geq 0 \textrm{ for all }
i=0,\ldots,d-1 \}=\{\epsilon \in \R: \epsilon \geq \alpha\}.
\]
\end{lemma}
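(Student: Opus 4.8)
The plan is to prove both inclusions of the claimed set equality by leveraging the two preceding lemmas.

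First I would establish the inclusion $\{\epsilon : \epsilon \geq \alpha\} \subseteq I$, where $I$ denotes the set on the left. Since $\alpha$ is the real zero of $f$ exceeding the real part of every other zero, all zeros lie in the closed half-plane $\{\mathrm{Re}(z) \leq \alpha\}$. The key observation is that each derivative $f^{(i)}$ inherits this property: the zeros of $f^{(i)}$ are, by the Gauss--Lucas theorem, contained in the convex hull of the zeros of $f$, hence also satisfy $\mathrm{Re}(z) \leq \alpha$. Therefore $f^{(i)}$ has no real zero strictly greater than $\alpha$, and since $f^{(i)}$ is monic it is eventually positive; with no sign change beyond $\alpha$ it must be nonnegative on $[\alpha,\infty)$. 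This shows $\epsilon \geq \alpha$ forces $f^{(i)}(\epsilon) \geq 0$ for all $i = 0,\dots,d-1$, giving $\epsilon \in I$.

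For the reverse inclusion $I \subseteq \{\epsilon : \epsilon \geq \alpha\}$, I would simply invoke Lemma~\ref{lemma:sa1}: any $\epsilon \in I$ satisfies $f^{(i)}(\epsilon) \geq 0$ for $i = 0,\dots,d-1$, so by that lemma $\epsilon \geq \alpha'$ for every real zero $\alpha'$ of $f$, in particular $\epsilon \geq \alpha$. Combining the two inclusions yields the stated equality. Note that Lemma~\ref{l:connI} already tells us $I$ is a closed unbounded interval, which is consistent with and reinforces the conclusion that $I = [\alpha,\infty)$.

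The main obstacle I anticipate is the first inclusion, specifically arguing cleanly that $f^{(i)}$ is nonnegative on $[\alpha,\infty)$. The subtlety is that $\alpha$ need not be a zero of $f^{(i)}$, so one cannot directly apply Rolle's theorem; instead the argument rests on the Gauss--Lucas containment of critical points in the convex hull of the roots, combined with monicity to pin down the sign. One must be careful that the hypothesis is a strict separation ($\alpha$ larger than the real part of \emph{any other} zero), which guarantees $\alpha$ is a simple real root and that no derivative vanishes to the right of $\alpha$; this is exactly what rules out a sign change on the open ray $(\alpha,\infty)$.
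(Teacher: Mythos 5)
Your proof is correct, but the key inclusion is handled by a genuinely different argument than in the paper. For the inclusion $I \subseteq \{\epsilon : \epsilon \geq \alpha\}$ you and the paper do the same thing: invoke Lemma~\ref{lemma:sa1}. For the converse inclusion, however, the paper does not use Gauss--Lucas. It factors $f$ over $\R$ into linear and quadratic factors whose roots all have real part at most $\alpha$, rewrites each factor as a polynomial in $t-\alpha$, and observes that every coefficient is then nonnegative; this gives $f^{(i)}(\alpha) \geq 0$ for all $i$, i.e.\ $\alpha \in I$, and then Lemma~\ref{l:connI} (which says $I$ is a closed unbounded interval, necessarily unbounded above because the $f^{(i)}$ have positive leading coefficients) upgrades the single point $\alpha$ to the whole ray $[\alpha,\infty)$. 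Your route replaces both the factorization computation and Lemma~\ref{l:connI} by the Gauss--Lucas theorem: the zeros of every derivative lie in the convex hull of the zeros of $f$, hence in the half-plane $\{\mathrm{Re}(z) \leq \alpha\}$, so each $f^{(i)}$, having positive leading coefficient and no zero in $(\alpha,\infty)$, is positive there and nonnegative at $\alpha$ by continuity. Both arguments are sound; yours is slicker but imports a classical theorem, while the paper's stays elementary and self-contained, with the shifted-coefficient expansion making the nonnegativity of all derivatives on the ray visible at a glance.

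One parenthetical claim in your last paragraph is false, though harmlessly so: the hypothesis does not guarantee that $\alpha$ is a simple root of $f$ (take $f = (t-\alpha)^2$, which satisfies the hypothesis vacuously since there is no other zero). Fortunately your argument never needs simplicity: Gauss--Lucas already gives that no derivative vanishes on the open ray $(\alpha,\infty)$, and that, together with the positive leading coefficients and continuity at $\alpha$, is all you use. A second small slip is calling the derivatives $f^{(i)}$ monic; they are not, but they have positive leading coefficients, which is what your sign argument actually requires.
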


\begin{proof}
Consider the factorization of $f$ as
\[
f=\prod_{k=1}^s (t-(a_k+b_k\i))(t-(a_k-b_k\i)) \prod_{l=1}^r (t-c_l)
\] 
with $2s+r=d$ and real numbers $a_k,b_k,c_l \in \R$ that satisfy $a_k,c_l \leq \alpha$.
Then
\[
f=\prod_{k=1}^s ((t-\alpha)^2+2 (\alpha - a_k) (t-\alpha)
+(\alpha-a_k)^2+b_k^2) \prod_{l=1}^r ((t-\alpha)+(\alpha-c_l)).
\]
Thus, as a polynomial in $t-\alpha$, $f$ has nonnegative coefficients.
This shows $f^{(i)}(\alpha) \geq 0$ for all $i=0,\ldots,d-1$, from
which the statement follows by Lemmas~\ref{lemma:sa1}
and~\ref{l:connI}.
\end{proof}

Let $d,n \geq 1$ be integers. For every tuple
$\sigma \in \{0,\ldots, n-1\}^d$ we define the linear polynomial
$x_\sigma:=\sum_{i=1}^d \zeta_n^{\sigma_i} x_i \in
L[x_1,\ldots,x_d]$ where $\zeta_n \in \C$ is a primitive $n$th root of
unity and $L=\Q[\zeta_n]$.  Now consider the polynomial
\[Q_{n,d}=\prod_{\sigma \in \{0,\ldots, n-1\}^d} (t-x_\sigma) \in
L[t,x_1,\ldots,x_d].\]
Since $Q_{n,d}$ is fixed under the action of the Galois group of $L$
over~$\Q$, it has rational coefficients.  Since $Q_{n,d}$ is stable
under scaling $t$ or one of the $x_i$ by an $n$th root of unity, there
exists a polynomial $\tilde{Q}_{n,d} \in \Q[t,x_1,\ldots,x_d]$ of
degree $n^{d-1}$ with
$Q_{n,d}(t,x_1,\dots,x_d)=\tilde{Q}_{n,d}(t^n,x_1^n,\ldots,x_d^n)$.
For $i=0,\ldots,n^{d-1}-1$ let
\[P_{n,d,i}=\frac{\partial^i \tilde{Q}_{n,d}}{\partial t^i} |_{t=1}
\in \Q[x_1,\ldots,x_d]\]
be the $i$th derivative of $\tilde{Q}_{n,d}$ evaluated at $t=1$.

\begin{example}
Let $d=n=2$. We have 
\begin{align*} Q_{2,2} & = (t-x_1-x_2)
(t-x_1+x_2) (t+x_1-x_2) (t+x_1+x_2) \\
& = t^4-2t^2x_1^2-2t^2x_2^2+x_1^4-2x_1^2x_2^2+x_2^4.                   
\end{align*}
As predicted, $Q_{2,2}$ is a polynomial in $t^2,x_1^2,x_2^2$. We have
$Q_{2,2}(t,x_1,x_2)=\tilde{Q}_{2,2}(t^2,x_1^2,x_2^2)$ with
$\tilde{Q}_{2,2}=t^2-2tx_1-2tx_2+x_1^2-2x_1x_2+x_2^2$.
\end{example}

\begin{theorem}\label{t:diagIneqs}
A nonnegative vector $x \in \R_{\geq 0}^d$ is an element of $S_{n,d}$
if and only if $P_{n,d,i}(x) \geq 0$ for all $0 \leq i < n^{d-1}$.  If
$n$ is odd, then
$S_{n,d}=\{x \in \R_{\geq 0}^d: P_{n,d,0}(x) \geq 0 \}$.
\end{theorem}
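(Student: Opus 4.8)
The plan is to reduce the entire statement to the behaviour of a single monic univariate polynomial and then invoke the real-root lemmas already proved. Fix a point $x \in \R_{\geq 0}^d$ and write $y_i = x_i^{1/n} \geq 0$. Consider the monic polynomial $h(t) = \tilde{Q}_{n,d}(t,x_1,\ldots,x_d) \in \R[t]$ of degree $n^{d-1}$; by the definition of the $P_{n,d,i}$ we have $P_{n,d,i}(x) = h^{(i)}(1)$. Thus the condition ``$P_{n,d,i}(x)\geq 0$ for all $0\leq i<n^{d-1}$'' is precisely the statement that $1$ lies in the set $I=\{\epsilon\in\R : h^{(i)}(\epsilon)\geq 0 \text{ for all } i\}$ studied in Lemmas~\ref{l:connI} and~\ref{l:largerThanRealPart}. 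The first task is to locate the roots of $h$. Starting from $Q_{n,d}(s,y)=\prod_\sigma\bigl(s-\sum_i\zeta_n^{\sigma_i}y_i\bigr)=\tilde{Q}_{n,d}(s^n,y_1^n,\ldots,y_d^n)$ and grouping the indices $\sigma$ into orbits under $\sigma\mapsto\sigma+(1,\ldots,1)$, each orbit contributes a factor $s^n-\bigl(\sum_i\zeta_n^{\sigma_i}y_i\bigr)^n$, so that $h(t)=\prod_{\text{orbits}}\bigl(t-(\sum_i\zeta_n^{\sigma_i}y_i)^n\bigr)$. Hence the roots of $h$ are the $n$th powers $u_\sigma=\bigl(\sum_i\zeta_n^{\sigma_i}y_i\bigr)^n$, one per orbit.

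Next I would identify the dominant root. The orbit of $\sigma=(0,\ldots,0)$ yields the real root $\alpha=\bigl(\sum_i y_i\bigr)^n=\bigl(\sum_i x_i^{1/n}\bigr)^n$. For every $\sigma$ the triangle inequality gives $\bigl|\sum_i\zeta_n^{\sigma_i}y_i\bigr|\leq\sum_i y_i$ (using $y_i\geq 0$), whence $|u_\sigma|\leq\alpha$ and therefore $\mathrm{Re}(u_\sigma)\leq|u_\sigma|\leq\alpha$. So $\alpha$ is a real root of $h$ that is at least as large as the real part of every root. Applying Lemma~\ref{l:largerThanRealPart} to $f=h$ then gives $I=\{\epsilon\geq\alpha\}$. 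Consequently $P_{n,d,i}(x)\geq 0$ for all $i$ if and only if $1\geq\alpha$, i.e.\ if and only if $\sum_i x_i^{1/n}\leq 1$, which is exactly $x\in S_{n,d}$. This proves the first equivalence of the theorem for every $n$.

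For the odd case I would reduce the full system to the single inequality $P_{n,d,0}(x)=h(1)\geq 0$. Since the $y_i$ are real, $u_{-\sigma}=\overline{u_\sigma}$, so the non-dominant roots come in complex-conjugate pairs. The point where oddness enters is that a self-conjugate orbit requires $-\sigma\equiv\sigma+c(1,\ldots,1)$, i.e.\ $2\sigma_i\equiv -c\pmod n$ for all $i$; as $2$ is invertible modulo an odd $n$, this forces $\sigma$ to be constant, so the \emph{only} self-conjugate orbit is the dominant one. (For even $n$ extra self-conjugate orbits appear, producing genuine real non-dominant roots and single linear factors that can change sign — this is exactly why all of the $P_{n,d,i}$ are needed in general.) Grouping the remaining factors into conjugate pairs therefore gives $h(1)=(1-\alpha)\prod_{\text{pairs}}|1-u_\sigma|^2$, so the sign of $h(1)$ is governed by $1-\alpha$, yielding $h(1)\geq 0\iff\alpha\leq 1\iff x\in S_{n,d}$. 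I expect the main obstacle to lie precisely in making this last step airtight: the equality $\sgn h(1)=\sgn(1-\alpha)$ is immediate only when every factor $|1-u_\sigma|^2$ is strictly positive, and one must control the exceptional locus on which some non-dominant root equals $1$ while $\alpha>1$ (there $h(1)=0$), showing that it does not enlarge $\{x\in\R_{\geq 0}^d : P_{n,d,0}(x)\geq 0\}$ beyond $S_{n,d}$. Carefully analysing these coincidences of roots, using the oddness of $n$, is the delicate heart of the argument; the earlier steps are routine once the root structure of $h$ is in hand.
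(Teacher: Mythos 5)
Your proof of the first equivalence is correct and is essentially the paper's own argument: the roots of $h(t)=\tilde{Q}_{n,d}(t,x_1,\ldots,x_d)$ are the numbers $u_\sigma=(\sum_i\zeta_n^{\sigma_i}x_i^{1/n})^n$, the real root $\alpha=(\sum_i x_i^{1/n})^n$ dominates the real parts of all others, and Lemma~\ref{l:largerThanRealPart} applied at $\epsilon=1$ finishes the claim. Your explicit orbit bookkeeping and the triangle-inequality justification are details the paper leaves implicit, but the route is the same.

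The odd-$n$ part is a different story, and your caution about the ``delicate heart'' is vindicated in the strongest possible way: the exceptional locus you describe is nonempty, so the gap cannot be closed, because the second sentence of Theorem~\ref{t:diagIneqs} is false as stated. Concretely, take $n=3$, $d=3$, $x=(1,1,8)$. The orbit of $\sigma=(1,2,0)$ produces the root $(\zeta_3\cdot 1+\zeta_3^2\cdot 1+1\cdot 2)^3=(-1+2)^3=1$ of $h$, so $P_{3,3,0}(x)=\tilde{Q}_{3,3}(1,1,1,8)=0\geq 0$, while $\sum_i x_i^{1/3}=4>1$, i.e.\ $x\notin S_{3,3}$. (Even $d=2$ fails for $n=5$: with $y_1=y_2=(1+\sqrt{5})/2$ one checks $y_1+\zeta_5^2y_2=\zeta_5$, so $x=(y_1^5,y_2^5)$ has $P_{5,2,0}(x)=0$ but $\alpha=(y_1+y_2)^5>1$.) Note that the paper's own proof of the odd case rests on the assertion that $\alpha$ is the \emph{only} real zero of $\tilde{Q}_{n,d}(t,x)$ for odd $n$; the same point refutes this, since $\tilde{Q}_{3,3}(t,1,1,8)$ has real zeros $64$, $1$ and $-8$. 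Your conjugate-orbit pairing (the only self-conjugate orbit is the constant one because $2$ is invertible modulo odd $n$) is sound and proves exactly what is true: $S_{n,d}\subseteq\{x\in\R_{\geq 0}^d:P_{n,d,0}(x)\geq 0\}$ always, and the reverse inclusion holds away from the locus where some non-dominant $u_\sigma$ equals~$1$. What survives of the statement is its first part (using all the $P_{n,d,i}$), or, for odd $n$, a corrected version such as $S_{n,d}=\cl\left(\{x\in\R_{\geq 0}^d:P_{n,d,0}(x)>0\}\right)$, which your factorization of $h(1)$ essentially establishes. So this is not a flaw in your approach but an error in the paper; your attempt locates it precisely.
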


\begin{proof}
Fix $z \in \R_{\geq 0}^d$. The roots of
$\tilde{Q}_{n,d}(t,z_1,\ldots,z_d) \in \R[t]$ are precisely the
complex numbers $(\sum_{i=1}^d \zeta_n^{\sigma_i} \sqrt[n]{z_i})^n$
for $\sigma \in \{0,\ldots,n-1\}^d$.  Indeed, the roots of $Q_{n,d}$
are the numbers $(\sum_{i=1}^d \zeta_n^{\sigma_i} z_i)$.  Since
$\tilde{Q}_{n,d}(t^n,z_1,...,z_d)=Q_{n,d}(t,\sqrt[n]{z_1},\ldots,\sqrt[n]{z_d})$,
the zeros of $\tilde{Q}(t^n,z_1,\ldots,z_d)$ are
$(\sum_{i=1}^d \zeta_n^{\sigma_i} \sqrt[n]{z_i})$,
i.e.~$\tilde{Q}((\sum_{i=1}^d \zeta_n^{\sigma_i} \sqrt[n]{z_i})^n
,z_1,\ldots,z_d)=0$.  The real zero
$\alpha=(\sum_{i=1}^d \sqrt[n]{z_i})^n\in \R$ is larger than the real
part of any other zero.  By Lemma~\ref{l:largerThanRealPart}, for
every $\epsilon \in \R$,
\[
\epsilon \geq \alpha \Leftrightarrow \frac{\partial^i
\tilde{Q}_{n,d}}{\partial t^i}(\epsilon,z_1,\ldots,z_d) \geq 0 \: \text{ for all
} i=0,\ldots,n^{d-1}-1.
\]
With $\epsilon=1$, this gives the first part of the claim.  If $n$ is
odd, then $\alpha$ is the only real zero of
$\tilde{Q}_{n,d}(t,z_1,\ldots,z_d) \in \R[t]$ and thus
$\epsilon \geq \alpha$ if and only if
$\tilde{Q}_{n,d}(\epsilon,z_1,\ldots,z_d) \geq 0$.
\end{proof}

Let $e_{i,d}$ denote the $i$th elementary symmetric polynomial in
$x_1, \ldots, x_d$.

\begin{example}
 Let $d=n=2$. Then we have \[\tilde{Q}_{2,2}=t^2-2 t (x_1+x_2)+(x_1-x_2)^2.\]
 Thus, $S_{2,2}$ is defined by the following inequalities:
 \begin{align*}
  x_1,x_2 &\geq  0 \\
  1-x_1-x_2 & \geq  0 \\
  1-2 (x_1+x_2)+(x_1-x_2)^2 &\geq  0,
 \end{align*}
 which can be rewritten as
  \begin{align*}
  x_1,x_2 &\geq  0 \\
  1-e_{1,2} &\geq  0 \\
  (1-e_{1,2})^2-4 e_{2,2} & \geq 0 .
 \end{align*}
\end{example}

\bibliography{biblio}
\bibliographystyle{plain}

\end{document}